\declaretheoremstyle[
 headfont=\normalfont\bfseries,
 headindent=\normalparindent,
 bodyfont=\em,
 spaceabove=8pt,
 spacebelow=8pt
]{thm}
\declaretheoremstyle[
 headfont=\normalfont\em,
 headindent=\normalparindent,
 spaceabove=8pt,
 spacebelow=8pt
]{remark}
\declaretheoremstyle[
 headfont=\normalfont\bfseries,
 headindent=\normalparindent,
 spaceabove=8pt,
 spacebelow=8pt
]{example}
\declaretheoremstyle[
 headfont=\normalfont\bfseries,
 headindent=\normalparindent,
 spaceabove=8pt,
 spacebelow=8pt
]{definition}
\declaretheorem[name=Theorem,style=thm,numberwithin=section,
]{thm}
\newtheorem*{thm*}{Theorem}
\declaretheorem[name=Proposition,style=thm,sibling=thm]{prop}
\declaretheorem[name=Lemma,style=thm,sibling=thm]{lem}
\declaretheorem[name=Corollary,style=thm,sibling=thm]{cor}
\crefname{thm}{Theorem}{Theorems}
\crefname{prop}{Proposition}{Propositions}
\crefname{lem}{Lemma}{Lemmas}
\crefname{cor}{Corollary}{Corollaries}
\crefname{example}{Example}{Examples}
\crefname{defn}{Definition}{Definitions}
\crefname{rem}{Remark}{Remarks}
\crefname{enumi}{}{}
\crefname{enumii}{}{}
\crefname{enumiii}{}{}
\crefname{equation}{}{}
\setlist{leftmargin=2.5em}
\setlist[enumerate,1]{label=(\roman*),ref=(\roman*)}
\numberwithin{equation}{section}
\newcommand{\be}{\begin{equation}}
\newcommand{\ee}{\end{equation}}
\newcommand{\bel}{\begin{equation}}
\newcommand{\eel}{\end{equation}}
\newcommand{\bee}{\begin{eqnarray*}}
\newcommand{\eee}{\end{eqnarray*}}
\newcommand{\RR}{\mathbb{R}}
\newcommand{\CC}{\mathbb{C}}
\newcommand{\boxop}{\Box_{h,\mu}}
\newcommand{\dbarstar}{\dbar^*_{h,\mu}}
\newcommand{\energ}{\mathcal{E}_{h,\mu}}
\DeclareMathOperator{\trace}{tr}
\DeclareMathOperator{\vol}{Vol}
\newcommand{\dvol}{d\!\vol}
\DeclareMathOperator{\dom}{dom}
\DeclareMathOperator{\Lip}{Lip}
\newcommand{\lip}{\Lip(M,h)\cap L^\infty(M)}
\newcommand{\overbar}[1]{\mkern 1mu\overline{\mkern-1mu#1\mkern-1mu}\mkern 1mu} 
\newcommand{\ol}[1]{\overline{#1}}
\newcommand{\Cplx}{\mathbb{C}}
\newcommand{\Cplxi}{i}
\renewcommand{\Re}{\operatorname{Re}} 
\renewcommand{\leq}{\leqslant}
\renewcommand{\geq}{\geqslant}
\renewcommand{\dim}[1][]{\operatorname{dim}_{#1}}
\newcommand{\dbar}{{\smash{\overbar{\partial}}}} 
\newcommand{\bergman}[1][2]{A^{#1}} 
\begin{document}
\title[]{Exponential decay of Bergman kernels on complete Hermitian manifolds with Ricci curvature\\ bounded from below}
\author{Franz Berger}
\author{Gian Maria Dall'Ara}
\author{Duong Ngoc Son}
\address{Fakult\"{a}t f\"{u}r Mathematik, Universit\"{a}t Wien, Oskar-Morgenstern-Platz 1, 1090 Wien, Austria }
\email{franz.berger2@univie.ac.at}
\email{gianmaria.dallara@univie.ac.at}
\email{son.duong@univie.ac.at}

\date{\today}
\subjclass[2000]{32A25, 32W05}
\thanks{The first and second-named authors were supported by the Austrian Science Fund (FWF) project P28154.}
\thanks{The third-named author was supported by the Austrian Science Fund (FWF) project I01776.}

\begin{abstract}
Given a smooth positive measure $\mu$ on a complete Hermitian manifold with Ricci curvature bounded from below, we prove a pointwise Agmon-type bound for the corresponding Bergman kernel, under rather general conditions involving the coercivity of an associated complex Laplacian on $(0,1)$-forms.  Thanks to an appropriate version of the Bochner--Kodaira--Nakano basic identity, we can give explicit geometric sufficient conditions for such coercivity to hold. 

Our results extend several known bounds in the literature to the case in which the manifold is neither assumed to be K\"ahler nor of ``bounded geometry''. The key ingredients of our proof are a localization formula for the complex Laplacian (of the kind used in the theory of Schr\"odinger operators) and a mean value inequality for subsolutions of the heat equation on Riemannian manifolds due to Li, Schoen, and Tam.

We also show in an appendix that the ``twisted basic identities'' of, e.g., \cite{mcneal--varolin} are standard basic identities with respect to conformally K\"ahler metrics.
\end{abstract}

\maketitle


\section{Introduction}\label{sec:intro}

\subsection{The problem and previous results}\label{sec:problem} Bergman spaces of holomorphic functions and related Bergman kernels are classical objects of complex analysis and geometry (see, e.g., \cite{bergman,krantz,ma--marinescu/book} and the references therein). If $M$ is a complex manifold and $\mu$ a positive Borel measure on $M$,  the \emph{Bergman space} $\bergman(M,\mu)$ is the linear space of square-integrable holomorphic functions on $M$, i.e.,
\begin{equation}
	\bergman(M,\mu) \coloneqq \bigg\{f \colon M \to \Cplx : f \text{ is holomorphic and } \int_M |f|^2\,d\mu < \infty \bigg\}.
\end{equation}
Under mild assumptions on $\mu$, $A^2(M,\mu)$ is closed in $L^2(M,\mu)$ and actually a reproducing kernel Hilbert space (see Section \ref{sec:bergman}). The \emph{Bergman kernel} $K_\mu \colon M \times M \to \Cplx$ is defined by the relation 
\begin{equation}
	B_\mu f(p) = \int_M K_\mu(p,q) f(q)\,d\mu(q), \qquad f \in L^2(M,\mu),\quad  p \in M,
\end{equation} 
where $B_\mu$ is the orthogonal projection of $L^2(M,\mu)$ onto $A^2(M,\mu)$.

It has been shown (see, e.g., \cite{christ,delin,marzo--ortega-cerda,lindholm,ma--marinescu,schuster--varolin,dallara}) that, under various assumptions on $\mu$, one can find a Hermitian metric $h$ such that the following Agmon-type pointwise decay estimate holds:
\bel\label{exp-decay}
	|K_\mu(p,q)| e^{-\psi(p)-\psi(q)}\leq C\frac{e^{-\gamma d(p,q)}}{\sqrt{\vol(p,1)\vol(q,1)}} \qquad (p,q\in M).
\eel
Here $d(p,q)$ is the Riemannian distance between $p$ and $q$, $\vol$ is the Riemannian volume, $\vol(p,1)$ is the volume of the ball centered at $p$ and of radius $1$, and $\psi$ is determined by the relation $\mu = e^{-2\psi} \vol$. The positive constants $C$ and $\gamma$ do not depend on $p$ and $q$. We point out that $U_\psi f\coloneqq e^{-\psi}f$ is a unitary isomorphism of $L^2(M,\mu)$ onto $L^2(M,\vol)$, and $U_\psi\circ B_\mu\circ U_\psi^{-1}$ is an orthogonal projector on $L^2(M,\vol)$. The left-hand side of \cref{exp-decay} is thus the modulus of the integral kernel of this projector, and the estimate shows that this kernel exhibits an off-diagonal exponential decay, which can be neatly expressed in terms of the metric $h$. Estimates of the form \cref{exp-decay} have had numerous applications in complex analysis and geometry (see, e.g.,~\cite{ma--marinescu} and \cite{lu--zelditch} and the references therein). 

Typically, assumptions for \cref{exp-decay} to hold can be formulated as conditions on the ``curvature form'' $F^\mu$ of $\mu$, which is defined as follows: in local holomorphic coordinates, one writes $d\mu=\Cplxi e^{-2\varphi}\,dz^1\wedge dz^{\ol 1}\wedge\dotsb\wedge dz^n\wedge dz^{\ol n}$, where $\varphi$ is smooth and real-valued. Then one can easily check that $F^{\mu} \coloneqq i\partial\dbar\varphi$ is a real $(1,1)$-form which does not depend on the choice of the coordinates. Thus, $F^{\mu}$ is globally defined and we shall call it the \emph{curvature form} of~$\mu$.

We now proceed to describe some of the aforementioned results in a little more detail.
\begin{enumerate}[label={(\arabic*)}]
\item\label{item:christ} In the one-dimensional case $M=\CC$, $\mu=e^{-2\psi}\lambda$, where $\lambda$ is Lebesgue measure and $\psi$ is subharmonic, the curvature form $F^\mu$ may be identified with (a multiple of) the measure $\Delta\psi\,\lambda$, where $\Delta$ is the usual Laplacian. It was shown by Christ~\cite{christ} (but see also \cite{marzo--ortega-cerda}) that if $F^\mu$ is doubling and satisfies 
\be
	\inf_{z\in\CC} F^\mu(D(z,1))>0,  \quad \text{where}\ D(z, r)\coloneqq \{z\in \Cplx \colon |z|< r\},
\ee 
then \cref{exp-decay} holds with respect to the metric $h=\rho^{-2}|dz|^2$, with 
\be
	\rho(z)\coloneqq\sup\big\{r>0\colon F^\mu(D(z,r))\leq 1\big\}.
\ee 

\item\label{item:delin} In \cite{delin}, Delin considers $M=\CC^n$ and $\mu=e^{-2\psi}\lambda$, where $\psi$ is strictly plurisubharmonic, and proves an estimate that takes the form \cref{exp-decay} with $h$ the K\"ahler metric $i\partial\dbar\psi$, at least when certain quantitative assumptions on $F^\mu$ are made. These conditions are not explicitly discussed by Delin (see the comment after the statement of Theorem~2 of \cite{delin}), but it is certainly sufficient that\bel\label{lindholm}
ic\partial\dbar |z|^2 \leq F^\mu\leq iC\partial\dbar |z|^2
\eel holds for some $0<c$ and $C<+\infty$, as shown in \cite[Proposition 9]{lindholm}.

\item\label{item:dallara} In \cite{dallara}, the second-named author deals with $M=\CC^n$ and $\mu=e^{-2\psi}\lambda$, where $\psi$ is only assumed to be \emph{weakly} plurisubharmonic. More precisely, if $\Delta\psi$ is in the reverse-H\"older class $RH_{\infty}$, and
\be\label{e:dallara1}
	\inf_{z\in\CC^n} \sup_{|w-z|<1}\Delta\psi(w)>0,
\ee
then estimate \cref{exp-decay} holds under the condition (for some $c>0$)
\begin{equation}\label{e:dallara15}
	F^\mu\geq ic\,\Delta\psi\,\partial\dbar |z|^2.
\end{equation}
In this case the metric is $h=\rho^{-2}|dz|^2$, where
\be
	\rho(z)\coloneqq\sup\big\{r>0\colon \textstyle{\sup_{|w-z|<r}}\Delta\psi(w)\leq r^{-2}\big\}.
\ee
The condition \cref{e:dallara15} amounts to the uniform comparability of the eigenvalues of the complex Hessian $(\partial_{z_j}\partial_{\ol z_k}\psi)_{j,k}$.
Notice that \cref{lindholm} implies \cref{e:dallara1} and \cref{e:dallara15}.

\item\label{item:schuster-varolin} In \cite{schuster--varolin}, Schuster and Varolin take $M$ to be the unit ball $\mathbb{B}\subseteq\CC^n$ endowed with the \emph{Bergman metric} \(\omega\coloneqq-\frac{i}{2}\partial\dbar\log(1-|z|^2)\), and prove \cref{exp-decay} for measures $\mu=e^{-2\psi}\omega^n/n!$, under the condition
\begin{equation}\label{e:SV14condition}
	(n+\sigma)\omega\leq i\partial\dbar\psi\leq C\omega,
\end{equation}
where $\sigma>\frac{1}{2}$. One can see that $F_\mu=i\partial\dbar\psi-(n+1)\omega$ in this case (by \cref{Fmu-formula} below and the fact that $\omega$ is K\"ahler--Einstein with Ricci curvature $\Theta=-2(n+1)\omega$), and hence \cref{e:SV14condition} is equivalent to
\be
	c\omega \leq F_\mu\leq C\omega
\ee 
for some $c>-1/2$ and $C<+\infty$. 

The result was generalized by Asserda \cite{asserda} to K\"ahler manifolds satisfying a certain bounded geometry assumption.

\item\label{item:ma-marinescu} In \cite{ma--marinescu}, Ma and Marinescu prove a pointwise $C^k$ estimate for the Bergman kernels in the more general setting of Hermitian line bundles over symplectic manifolds (satisfying appropriate compatibility conditions). Specializing to the present situation, Theorem~1 in that paper requires in particular that the Hermitian manifold $(M,h)$ has ``bounded geometry'', and that the measure $\mu=e^{-2\psi}\vol$ is such that\be
c\omega_h\leq i\partial\dbar\psi\leq C\omega_h
\ee (where $\vol$ is the Riemannian volume and $\omega_h$ the fundamental form) for $c>0$ and $C<+\infty$. Then, if $k>0$ is large enough, the measure $\mu^{(k)}=e^{-2k^2\psi}\vol$ satisfies \bel\label{ma-marinescu}
|K_{\mu^{(k)}}(p,q)| e^{-k^2\psi(p)-k^2\psi(q)}\leq Ck^{2n}e^{-\gamma kd(p,q)}, \qquad (p,q\in M)
\eel 
with $C$ independent of $p$, $q$, and $k$. Notice that the absence of the volume factors in \cref{ma-marinescu} is due to the bounded geometry assumption. In fact, if the volumes of balls with a fixed positive radius is bounded away from zero (which is the case if the sectional curvature is bounded from above (by \cite[Theorem~3.101]{gallot--hulin--lafontaine}) then the volume factors can be absorbed into the constant~$C$.
\end{enumerate}

These results, despite being of the same nature, present two different points of view on the problem of establishing exponential decay of Bergman kernels: \cref{item:christ,item:delin,item:dallara} start with a measure $\mu$ and construct a metric $h$ with respect to which the exponential decay \cref{exp-decay} holds, while \cref{item:schuster-varolin,item:ma-marinescu} start with a Hermitian manifold and look for conditions on the density of $\mu$ with respect to the Riemannian volume that are sufficient for \cref{exp-decay} to hold. Moreover, in \cref{item:christ,item:delin,item:dallara} a natural candidate for $h$ is the K\"ahler metric with fundamental form $F^\mu$, but the latter form need not be positive, and in fact \cref{item:christ,item:dallara} consider a sort of regularization of $F^\mu$ and the resulting metric is typically \emph{non-K\"ahler}. 

\subsection{Our results}

To state our results, we shall need to recall and fix some more notation. Let $(M,h)$ be a complete Hermitian manifold with a Hermitian metric~$h$:
\be 
	h=h_{j\ol k} \, dz^j \otimes dz^{\ol k}.
\ee
The associated $(1,1)$-form \(\omega_h\coloneqq ih_{j\ol k}\, dz^j\wedge dz^{\ol k}\) is called the \emph{fundamental form}. As usual, we refer to both $h$ and $\omega_h$ as a metric on $M$. As is well-known, the torsion tensor $T$ of the Chern connection is non-trivial if and only if the metric is K\"ahler: locally, $T$ has components
\be 
	T_{jk}^{\ell}
	=
	\ol{T_{\ol j\ol k}^{\ol\ell}}
	=
	h^{\ell \overline{m}}\left(\partial_j h_{k\overline{m}} - \partial_k h_{j\overline{m}}\right).
\ee 
We shall deal with the \emph{torsion $1$-form}, obtained by taking the trace of the torsion:
\bel\label{torsion-1-form}
\theta = T^k_{jk}\, dz^j + T^{\ol k}_{\ol j\ol k}\, dz^{\ol j},
\eel 
and the \emph{torsion $(1,1)$-form} $T\circ \ol T$  defined by
\bel\label{torsion-squared}
T\circ \ol T\coloneqq h^{a\ol\ell}h^{b\ol m}h_{\ol q j} h_{p\ol k}\,  T^{p}_{a b}\, \ol{T^{q}_{\ell m}}\, dz^j\wedge dz^{\ol k}.
\eel
The Riemannian metric $g\coloneqq2\Re h$ induces a distance $d_h$ and a volume $\vol$. We denote by $\vol(p, R)$ the volume of the metric ball $B(p,R) \coloneqq \{q \in M : d(p,q) \leq R\}$ of radius $R$ centered at~$p$.\newline
Since a Hermitian metric $h$ induces inner products for tensors of all ranks, we can consider the space $L^2_{0,q}(M,h,\mu)$ of square-integrable $(0,q)$-forms on $M$, with inner product given by 
\be
	(u,v) \mapsto \int_M\langle u,v\rangle_h\, d\mu.
\ee
We denote by $\dbarstar$ the Hilbert space adjoint of (the weak extension of) $\dbar$ with respect to this inner product, and define the complex Laplacian associated to $\mu$ and $h$ by
\be
\boxop\coloneqq\dbar\dbarstar+\dbarstar\dbar.
\ee
This is an unbounded self-adjoint and nonnegative operator that encapsulates the interaction between $\mu$ and $h$. 
In this paper, we only consider $\boxop$ acting on $(0,1)$-forms. We say that $\boxop$ is \emph{$b^2$-coercive} ($b>0$) if  $\boxop\geq b^2$ in the sense of quadratic forms. We refer to \cref{sec:coerc} for precise definitions. We are finally in a position to state our main result.
\begin{thm}\label{thm-1}
Let $(M,h)$ be a complete Hermitian manifold with (Levi--Civita) Ricci curvature bounded from below. Assume that $\mu=e^{-2\psi}\vol$ satisfies the following properties:
\begin{enumerate}
\item\label{item:thm-1_coercivity} $\boxop$ is $b^2$-coercive for some $b>0$.
\item\label{item:thm-1_bound} $\trace_{\omega_h}(i\partial\dbar\psi) + \tfrac{1}{8}|\theta|_h^2\leq B<+\infty$.
\end{enumerate}
Then the Bergman kernel $K_\mu$ satisfies the following estimate for every $\gamma<\sqrt{2}b$:
\bel\label{exp-decay-2}
	|K_\mu(p,q)| e^{-\psi(p)-\psi(q)}\leq C\frac{e^{-\gamma d(p,q)}}{\sqrt{\vol(p,1)\vol(q,1)}}, \qquad (p,q\in M)
\eel
where $C$ depends only on $\gamma$, $b$, $B$, and the bound on the Ricci curvature.\newline 
Moreover, the coercivity condition \cref{item:thm-1_coercivity} holds if the curvature form $F^{\mu}$ satisfies
\bel\label{suff-cond}
	F^\mu \geq \sigma b^2\omega_h + i\left(\frac{\sigma}{2\sigma -1}\right) T\circ \ol T.
\eel
for some $\sigma > \frac12$. If $T=0$, the conclusion still holds under the condition $F^\mu \geq \frac12 b^2\omega_h$ .
\end{thm}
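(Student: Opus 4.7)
The plan is to split the theorem into two largely independent parts. Part (A) derives the pointwise bound \cref{exp-decay-2} from hypotheses \cref{item:thm-1_coercivity}, \cref{item:thm-1_bound}, and the Ricci lower bound. Part (B) shows that the curvature condition \cref{suff-cond} implies $b^2$-coercivity of $\boxop$. Part (A) further decomposes into an $L^2$-type exponential decay for the weighted Bergman kernel, followed by a pointwise upgrade via a mean value inequality on the underlying Riemannian manifold.

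For the $L^2$ step in part (A), I would run an Agmon-type conjugation argument of the kind used in Schr\"odinger theory. Using the representation $B_\mu = I - \dbarstar N \dbar$, where $N = \boxop^{-1}$ exists on $L^2_{0,1}(M,h,\mu)$ by the coercivity hypothesis, the task reduces to controlling $e^{\gamma \eta} N e^{-\gamma \eta}$ for suitable $1$-Lipschitz $\eta$. The key localization identity I would aim for is
$$\Re\,\bigl\langle e^{\gamma \eta}\boxop e^{-\gamma \eta}u,u\bigr\rangle_\mu = \langle \boxop u,u\rangle_\mu - \tfrac{\gamma^2}{2}\,\bigl\langle |\nabla \eta|^2 u,u\bigr\rangle_\mu,$$
in which the $O(\gamma)$ commutator contributions cancel upon taking the real part. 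Combined with $b^2$-coercivity, this yields an a priori bound of the form $(b^2 - \gamma^2/2)\|e^{\gamma \eta}u\|_\mu^2 \leq \|e^{\gamma \eta}\boxop u\|_\mu \|e^{\gamma \eta}u\|_\mu$ whenever $\gamma < b\sqrt{2}$. Choosing $\eta(\cdot) = \min(d(\cdot, q), R)$ (suitably regularized to be smooth and $1$-Lipschitz) and letting $R \to \infty$ gives the desired weighted $L^2$ decay of $K_\mu(\cdot, q)$.

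For the pointwise upgrade, I would show that $v(p) := e^{-\psi(p)} K_\mu(p,q)$ satisfies a heat-type subsolution inequality $\Delta_g |v|^2 \geq -C |v|^2$ for the Laplace-Beltrami operator of the Riemannian metric $g = 2\Re h$, with constant $C$ controlled by $\trace_{\omega_h}(i\partial\dbar\psi) + \tfrac{1}{8}|\theta|_h^2 \leq B$. The specific constant $\tfrac{1}{8}$ in hypothesis \cref{item:thm-1_bound} should emerge from completing the square on the torsion terms appearing when one rewrites $\Delta_g|v|^2$ in terms of the Chern Laplacian. The Li-Schoen-Tam mean value inequality for nonnegative subsolutions of the heat equation on Riemannian manifolds with Ricci curvature bounded below then yields $|v(p)|^2 \lesssim \vol(p,1)^{-1}\int_{B(p,1)}|v|^2\,\dvol$, which combined with the $L^2$ decay produces \cref{exp-decay-2}; a symmetric argument in the $q$-variable produces the second volume factor.

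For part (B), I would apply the Bochner-Kodaira-Nakano basic identity for the Hermitian measure $\mu = e^{-2\psi}\vol$. On $(0,1)$-forms, this identity schematically reads
$$\langle \boxop u, u\rangle_\mu = Q(u) + \langle F^\mu \cdot u, u\rangle_\mu + \mathcal{R}(u, T),$$
where $Q(u) \geq 0$ is a positive quadratic form in the Chern covariant derivatives of $u$, and $\mathcal{R}(u,T)$ consists of cross-terms bilinear in $u$ and $\nabla u$ and linear in the torsion. Applying Young's inequality with a parameter depending on $\sigma$ absorbs these cross-terms into $Q(u)$, leaving a remainder of the form $\tfrac{\sigma}{2\sigma - 1}\langle iT\circ\ol{T}\cdot u, u\rangle_\mu$, which is exactly compensated by the curvature hypothesis \cref{suff-cond}; dropping the remaining $Q(u) \geq 0$ then yields $\langle \boxop u, u\rangle_\mu \geq b^2 \|u\|_\mu^2$. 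The K\"ahler subcase with $T = 0$ is recovered in the limit $\sigma \to 1/2$. The main obstacle I anticipate is the cancellation of linear-in-$\gamma$ terms in the Agmon identity in the non-K\"ahler \emph{and} weighted setting, together with the delicate matching of constants in the subsolution computation; both require careful bookkeeping of the torsion $1$-form $\theta$ and of the weight gradient $d\psi$, and the appearance of the precise factor $\tfrac{1}{8}$ in \cref{item:thm-1_bound} must be traced through these computations.
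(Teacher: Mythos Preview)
Your proposal is correct and follows essentially the same route as the paper: your Agmon conjugation identity is exactly the paper's localization formula for $\boxop$ (Lemma~\ref{local-lem}), the pointwise upgrade via the Li--Schoen--Tam subsolution inequality is Lemma~\ref{mean-value-lem}, and part~(B) is precisely the basic inequality~\eqref{basic-inequality} with Young's parameter $\nu=2\sigma-1$. The one step you gloss over---passing from weighted bounds on $N=\boxop^{-1}$ to weighted $L^2$ decay of $K_\mu(\cdot,q)$, which requires handling the extra $\dbarstar$ in Kohn's identity $B_\mu=I-\dbarstar N\dbar$---is supplied in the paper by a short Caccioppoli-type estimate (Proposition~\ref{cacciop-lem}), itself another consequence of the same localization formula.
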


To compare this result with existing ones in the literature (e.g. \cite{lindholm,schuster--varolin,ma--marinescu}), it is useful to reformulate \cref{suff-cond} in terms of the Chern--Ricci form $\Theta_h$ and $i\partial\bar{\partial} \psi$. Indeed, since $\Theta_h=-i\partial\dbar \log\det(h_{j\ol k})$, it follows that if $\mu = e^{-2\psi}\vol$,  then
\bel\label{Fmu-formula}
	F^\mu = i\partial\dbar\psi +\tfrac{1}{2}\Theta_h.
\eel 
Hence, \cref{suff-cond} is equivalent to
\begin{equation} \label{e:reform}
	i\partial\dbar\psi + \tfrac{1}{2}\Theta_h \geq   \sigma b^2\omega_h + i\left(\frac{\sigma}{2\sigma -1}\right) T\circ \ol T.
\end{equation}

Observe that the assumptions in \cref{thm-1} are much simplified if $h$ is K\"ahler. Indeed, if $(M,h)$ is K\"ahler then $T=0$ and hence \cref{suff-cond} reduces to $i\partial\dbar\psi + \tfrac{1}{2}\Theta_h \geq b^2 \omega_h$. On the other hand, since $\theta=0$ and the Chern--Ricci form is bounded from below, condition \emph{(2)} is implied by the assumption that $F^\mu = i\partial\dbar\psi + \tfrac{1}{2}\Theta_h\leq B\omega_h<+\infty$. We obtain the following \namecref{cor:kahler} which is new already in this special (K\"ahler) case. 

\begin{cor}\label{cor:kahler} Let $(M,h)$ be a complete K\"ahler manifold with Ricci curvature bounded from below. Assume that $\mu$ satisfies
\be\label{e:cor1}
	\tfrac{1}{2}b^2\omega_h\leq F^\mu\leq B\omega_h,
\ee 
for some $b>0$ and $B<+\infty$, then \cref{exp-decay-2} holds for $\gamma<\sqrt{2}b$. 
\end{cor}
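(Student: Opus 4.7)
The plan is to derive \Cref{cor:kahler} as a direct consequence of \Cref{thm-1}, which it is structured to match. Since $(M,h)$ is K\"ahler, the torsion tensor of the Chern connection vanishes identically, so both the torsion $1$-form $\theta$ (see \cref{torsion-1-form}) and the torsion $(1,1)$-form $T\circ\ol T$ (see \cref{torsion-squared}) are zero. In particular, the sufficient criterion for coercivity stated in the last sentence of \Cref{thm-1} reduces exactly to $F^\mu\geq\tfrac12 b^2\omega_h$, which is the left-hand inequality in~\cref{e:cor1}. Thus hypothesis \ref{item:thm-1_coercivity} of \Cref{thm-1} holds with the given value of~$b$.

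For hypothesis \ref{item:thm-1_bound}, I would use the identity \cref{Fmu-formula} to rewrite
\[
	i\partial\dbar\psi = F^\mu-\tfrac12\Theta_h.
\]
On a K\"ahler manifold the Chern connection coincides with the Levi--Civita connection, so the Chern--Ricci form $\Theta_h$ and the Ricci form of the Levi--Civita connection agree. The assumption that the Ricci curvature is bounded from below therefore translates into a lower bound $\Theta_h\geq -K\,\omega_h$ for some $K\geq 0$, whence $-\tfrac12\Theta_h\leq\tfrac{K}{2}\omega_h$. Combining this with the upper bound $F^\mu\leq B\omega_h$ from \cref{e:cor1} yields the pointwise form inequality
\[
	i\partial\dbar\psi\leq\bigl(B+\tfrac{K}{2}\bigr)\omega_h.
\]
Taking the $\omega_h$-trace (and using $\trace_{\omega_h}\omega_h=n$) together with $\theta=0$ gives
\[
	\trace_{\omega_h}(i\partial\dbar\psi)+\tfrac18|\theta|_h^2\leq n\bigl(B+\tfrac{K}{2}\bigr)<+\infty,
\]
which is exactly hypothesis \ref{item:thm-1_bound}.

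With both hypotheses verified, \Cref{thm-1} directly yields the Agmon-type estimate \cref{exp-decay-2} for every $\gamma<\sqrt{2}\,b$, and the constant $C$ depends only on $\gamma$, $b$, $B$, and the Ricci lower bound (which is precisely what the statement of \Cref{cor:kahler} allows). There is no genuine obstacle: the argument is a translation of the general hypotheses into the simpler K\"ahler language. The only point worth checking carefully is the identification of the Chern--Ricci form with the Levi--Civita Ricci form on a K\"ahler manifold, which is a standard fact and allows the Riemannian Ricci lower bound to be upgraded to a $(1,1)$-form bound on $\Theta_h$.
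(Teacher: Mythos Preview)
Your proposal is correct and follows essentially the same route as the paper: the discussion immediately preceding \Cref{cor:kahler} observes that in the K\"ahler case $T=0$ and $\theta=0$, so the sufficient coercivity condition reduces to $F^\mu\geq\tfrac12 b^2\omega_h$, while the Chern--Ricci lower bound (inherited from the Levi--Civita Ricci bound) combined with $F^\mu\leq B\omega_h$ yields hypothesis~\ref{item:thm-1_bound}. Your write-up simply makes explicit the identification of the Chern--Ricci and Levi--Civita Ricci forms and the trace computation, which the paper leaves implicit.
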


After a preprint of this work was made public, the authors were informed by Shoo Seto that a result similar to \cref{cor:kahler}, in the special case of polarized K\"ahler manifolds, was obtained in collaboration with Lu, and appeared in his thesis \cite{seto-thesis}.\newline
It is worth noticing that under the assumptions of \cref{cor:kahler}, $F^\mu$ is the fundamental form of a metric $h_\mu$ that is ``comparable'' to $h$, and estimate~\cref{exp-decay-2} also holds with respect to $h_\mu$ (with a possibly different constant $\gamma$). \newline
Also note that when $h$ is the flat metric on $\CC^n$, the condition~\cref{e:cor1} is equivalent to \cref{lindholm}, which is considered by Lindholm \cite{lindholm}. \newline
For the Bergman metric on the unit ball, $\Theta_h=-2(n+1)\omega_h$, and thus \cref{e:cor1} reduces to $i\partial\dbar\psi \geq (n+1+b^2) \omega_h$, which is stronger than the assumption in Theorem~1.1 of \cite{schuster--varolin}.\newline
If $(M,h)$ has bounded geometry in the sense of \cite{ma--marinescu}, then \cref{e:reform} holds if $\psi$ is replaced by $k^2\psi$ for $k$ large enough, provided that $i\partial\dbar\psi \geq  \epsilon \omega_h$ for some $\epsilon>0$, and hence the estimate hold for $\mu^{(k)}\coloneqq e^{-2k^2\psi}\vol$. In \cref{cor3} below, we state precisely the geometric conditions for \cref{exp-decay-2} to hold for $\mu^{(k)}$.

\begin{cor}\label{cor3}
Let $(M,h)$ be a complete Hermitian manifold with Ricci curvature bounded from below. Suppose that there exist $\eta > 1, Q\geq 0$, and $P\in\RR$ such that
\begin{equation}\label{e:bound-torsion-ricci}
	|\theta|^2_h \leq Q, \quad\text{and}\quad
	\Theta_h - i \eta\, T\circ \ol{T} \geq  P \omega_h.
\end{equation}
Suppose further that $\psi$ satisfies 
\bel\label{cond-cor}
\frac12 b^2\omega_h\leq i\partial\dbar\psi\leq B\omega_h, 
\eel 
where $b>0$ and $B<+\infty$. Put $\mu^{(k)}\coloneqq e^{-2k^2\psi}\vol$, $k>0$. If $\gamma<b\sqrt{2(\eta -1)/\eta}$ and $k$ is large enough (depending on $\gamma$, $P$), then the Bergman kernel $K_{\mu^{(k)}}$ satisfies the following estimate:
\bel\label{exp-decay-3}
	|K_{\mu^{(k)}}(p,q)| e^{-k^2\psi(p)-k^2\psi(q)}
	\leq
	C\frac{k^{2n} e^{-\gamma k d(p,q)}}{\sqrt{\vol(p,1)\vol(q,1)}} \qquad (p,q\in M),
\eel
where $C$ depends only on $\gamma$, $b$, $B$, $Q$, and $K$.
\end{cor}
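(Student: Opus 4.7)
The strategy is to apply \cref{thm-1} directly to each $\mu^{(k)}$, choosing the parameters so that the coercivity constant is of order $kb$ and the resulting off-diagonal decay rate is $k\gamma$; the $k^{2n}$ prefactor then has to be extracted from the quantitative dependence of the constant in \cref{thm-1} on $B$.

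First, since $\mu^{(k)}=e^{-2k^2\psi}\vol$, the formula \cref{Fmu-formula} gives
\[
F^{\mu^{(k)}} \;=\; k^2\,i\partial\dbar\psi + \tfrac12\Theta_h.
\]
Combining the lower bound $i\partial\dbar\psi\geq \tfrac12 b^2\omega_h$ in \cref{cond-cor} with the curvature--torsion bound $\Theta_h\geq P\omega_h+i\eta\,T\circ\ol T$ from \cref{e:bound-torsion-ricci}, we obtain the pointwise estimate
\[
F^{\mu^{(k)}} \;\geq\; \tfrac12(k^2 b^2+P)\,\omega_h+\tfrac{\eta}{2}\,i\,T\circ\ol T.
\]

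Next, I verify the sufficient condition \cref{suff-cond} for the coercivity of $\boxop$ on $(0,1)$-forms. Matching the coefficient of $iT\circ \ol T$, I solve $\sigma/(2\sigma-1)=\eta/2$, which gives $\sigma=\eta/(2(\eta-1))$; since $\eta>1$, this $\sigma$ is automatically $>1/2$. Matching the coefficient of $\omega_h$ then forces the coercivity constant to be
\[
\tilde b_k \;=\; \sqrt{\tfrac{\eta-1}{\eta}(k^2 b^2+P)}.
\]
For any fixed $\gamma<b\sqrt{2(\eta-1)/\eta}$, the inequality $k\gamma<\sqrt{2}\,\tilde b_k$ becomes $\gamma^2<2b^2(\eta-1)/\eta+2P(\eta-1)/(\eta k^2)$, which holds once $k$ is large enough in terms of $\gamma,b,P,\eta$. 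Thus hypothesis \cref{item:thm-1_coercivity} of \cref{thm-1} is satisfied, with a coercivity constant $\tilde b_k$ that permits the decay rate $k\gamma$. Hypothesis \cref{item:thm-1_bound} is also easy: the upper bound in \cref{cond-cor} and the bound $|\theta|_h^2\leq Q$ yield
\[
\trace_{\omega_h}(k^2\,i\partial\dbar\psi)+\tfrac18|\theta|_h^2 \;\leq\; nBk^2+\tfrac{Q}{8} \;=:\; B_k.
\]

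Applying \cref{thm-1} to $\mu^{(k)}$ with parameters $(\tilde b_k,B_k)$ and decay rate $k\gamma$, together with the preserved Ricci lower bound on $(M,h)$, produces
\[
|K_{\mu^{(k)}}(p,q)|\,e^{-k^2\psi(p)-k^2\psi(q)}
\;\leq\;
C(\tilde b_k,B_k)\,\frac{e^{-k\gamma d(p,q)}}{\sqrt{\vol(p,1)\vol(q,1)}}.
\]
The main remaining task, and what I expect to be the true technical obstacle, is to show that $C(\tilde b_k,B_k)\leq C_0\,k^{2n}$, where $C_0$ depends only on $\gamma,b,B,Q,P$ and the Ricci bound. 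This is not visible from the mere statement of \cref{thm-1}; it requires inspecting the proof and tracking polynomial dependence on $B$. Heuristically the $k^{2n}$ is natural: the semiclassical length scale governed by $1/\tilde b_k\sim 1/k$ forces the Li--Schoen--Tam mean value inequality to be applied on balls of radius $\sim 1/k$, so the passage back to unit balls introduces a volume ratio $\vol(p,1)/\vol(p,1/k)$ that, by the Bishop--Gromov comparison (valid under the Ricci lower bound), is controlled by $k^{2n}$. Making this step rigorous by rerunning the localization argument of \cref{thm-1} at the scale $1/k$ is the substantive step in the proof.
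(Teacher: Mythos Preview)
Your strategy is correct in outline, and you have correctly identified the sticking point: the constant $C$ in \cref{thm-1} depends on $B$, and with $B_k\sim nBk^2$ you cannot simply read off the $k^{2n}$ factor from the statement of \cref{thm-1}. Your heuristic about applying the mean value inequality at scale $1/k$ is exactly right, but as written the proof is incomplete: you defer the substantive step to ``rerunning the localization argument at scale $1/k$'', which amounts to reproving \cref{thm-1} with explicit constants.

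The paper avoids this by a clean device you are missing: instead of tracking constants, it \emph{rescales the metric}, setting $h^{(k)}:=k^2h$ and applying \cref{thm-1} to the pair $(h^{(k)},\mu^{(k)})$. Under this rescaling, $\trace_{\omega_{h^{(k)}}}(i\partial\dbar\psi^{(k)})+\tfrac18|\theta|_{h^{(k)}}^2\leq nB+\tfrac18 k^{-2}Q$ is bounded uniformly in $k$, the Ricci lower bound only improves, and the coercivity constant $\tilde b$ becomes $k$-independent. Thus \cref{thm-1} applies with a constant $C$ independent of $k$. Converting back, $d_{h^{(k)}}=k\,d_h$ gives the $e^{-\gamma k d(p,q)}$ decay, while $\vol_{h^{(k)}}(p,1)=\vol_h(p,k^{-1})$; Bishop--Gromov then yields $\vol_h(p,1)\lesssim k^{2n}\vol_h(p,k^{-1})$, producing the $k^{2n}$ prefactor. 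This is precisely your ``scale $1/k$'' heuristic, but packaged so that \cref{thm-1} can be invoked as a black box.
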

It is clear that if $h$ is K\"ahler, then the inequalities in \cref{e:bound-torsion-ricci} trivially hold for $\eta>0$ arbitrary large, so that \cref{exp-decay-3} holds for any $\gamma<\sqrt{2}b$ (if $k$ is large enough).

\subsection{Structure of the paper and main ingredients of the proof}
After discussing some generalities about Bergman spaces and complex Laplacians in Section \ref{sec:bergman}, we start to present the ingredients of the proof of \cref{thm-1}.\newline 
As a first step, we establish the following exponential decay of canonical solutions of the $\dbar$-equation, which could be of independent interest.

\begin{thm}\label{thm-2}
Let $(M,h)$ be a complete Hermitian manifold and assume that the smooth positive measure $\mu$ is such that $\boxop$ is $b^2$-coercive for some $b>0$. Let $u\in L^2_{0,1}(M,h,\mu)$ be supported on the geodesic ball $B(p,R)$ and $\dbar$-closed (i.e., assume that $\dbar u=0$) and put $f\coloneqq\dbarstar\boxop^{-1}u$.\newline
Then for every $q \in M$ and $\gamma<2\sqrt{2}b$, the following bound holds:\bel\label{L2-decay}
\int_{B(q,R)}|f|^2\,d\mu\leq C e^{-\gamma d(p,q)}\int_{B(p,R)}|u|^2_h\,d\mu.
\eel
where $C$ depends only on $\gamma$, $b$, and $R$.\newline  
If in addition $(M,h)$ has Ricci curvature bounded from below by $K$ with $K\leq 0$, and $\mu=e^{-2\psi}\vol$ satisfies the condition
\bel\label{cond-torsion}
	\trace_{\omega_h}(\Cplxi \partial\dbar\psi) + \tfrac{1}{8} |\theta|_h^2\leq B<+\infty, 
\eel
then we have the pointwise bound ($\gamma<2\sqrt{2}b$ as above)
\bel\label{pointwise-decay}
|f(q)|^2e^{-2\psi(q)}\leq \frac{C}{\vol(q,R)}e^{-\gamma d(p,q)}\int_{B(p,R)}|u|^2_h\,d\mu, 
\eel where $C$ depends only on $\gamma, b, BR^2$, and $R\sqrt{-K}$.
\end{thm}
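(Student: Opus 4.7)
My plan is to first establish the weighted $L^2$ estimate \cref{L2-decay} via an Agmon-type argument that exploits the $b^2$-coercivity of $\boxop$, and then upgrade to the pointwise bound \cref{pointwise-decay} by applying the Li--Schoen--Tam mean value inequality to $|f|^2 e^{-2\psi}$. The coercivity guarantees that $v \coloneqq \boxop^{-1} u$ is well-defined; since $[\dbar, \boxop] = 0$ and $\dbar u = 0$, one has $\dbar v = 0$, so $f = \dbarstar v$ satisfies $\dbar f = u$.

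For the $L^2$ step, the core claim is that for any bounded Lipschitz weight $\phi$ with $|\nabla \phi|_g \le \alpha$ and $\alpha < \sqrt{2}\,b$,
\[ \|e^\phi f\|_{L^2(\mu)} \le C(\alpha,b)\,\|e^\phi u\|_{L^2(h,\mu)}. \]
I would pair $\boxop v = u$ with $e^{2\phi} v$ in $L^2(\mu)$. The twisted Leibniz rule $\dbarstar(e^{2\phi} v) = e^{2\phi}(\dbarstar v - 2\langle \partial\phi, v\rangle)$, combined with $\dbar v = 0$ and integration by parts, produces the identities
\[ \|e^\phi f\|^2 = 2\Re\langle e^\phi f,\, e^\phi \langle\partial\phi, v\rangle\rangle + \Re\langle e^\phi u,\, e^\phi v\rangle \]
and $\|\dbarstar(e^\phi v)\|^2 = \Re\langle e^\phi u, e^\phi v\rangle + \|e^\phi\langle\partial\phi, v\rangle\|^2$. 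The crucial algebraic input is the pointwise Lagrange-type identity
\[ |(\dbar\phi) \wedge v|_h^2 + |\langle\partial\phi, v\rangle|^2 = |\dbar\phi|_h^2\,|v|_h^2, \]
which reflects $|\xi \wedge \eta|^2 = |\xi|^2|\eta|^2 - |\langle\xi,\eta\rangle|^2$. Together with $\dbar(e^\phi v) = e^\phi (\dbar\phi) \wedge v$, applying the coercivity $\boxop \ge b^2$ to $\tilde v \coloneqq e^\phi v$ then collapses to
\[ (b^2 - |\dbar\phi|_\infty^2)\,\|e^\phi v\|^2 \le \|e^\phi u\|\,\|e^\phi v\|, \]
which is nontrivial exactly for $|\dbar\phi|_\infty < b$. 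Since $|\dbar\phi|_h = |\nabla\phi|_g/\sqrt{2}$ for real $\phi$, this is precisely the range $\alpha < \sqrt{2}\,b$, and the clean bound $\|e^\phi f\| \le \|e^\phi u\|/(b - |\dbar\phi|_\infty)$ falls out of the first identity. Taking $\phi$ to be a Lipschitz truncation of $\alpha (d(p,\cdot)-R)_+$, which vanishes on $\supp u \subset B(p,R)$ and is at least $\alpha(d(p,q) - 2R)$ on $B(q,R)$, yields \cref{L2-decay} with $\gamma = 2\alpha < 2\sqrt{2}\,b$.

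For the pointwise step, I would invoke the Li--Schoen--Tam mean value inequality: a nonnegative heat subsolution $w$ on a complete Riemannian manifold with $\Ric \ge K$ satisfies $w(q) \le C \vol(q,R)^{-1}\int_{B(q,R)} w\,\dvol$, with $C$ depending on $K$, $R$ and the subsolution constant. It remains to verify that $w \coloneqq |f|^2 e^{-2\psi}$ is weakly such a subsolution (up to a source term supported in $\supp u$ controlled by $|u|_h^2 e^{-2\psi}$): concretely, $\Delta_g w + K_0 w \ge -C_0\,|u|_h^2 e^{-2\psi}$, for constants $K_0$ and $C_0$ depending only on $B$ and on the Ricci lower bound. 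This reduces to a Bochner--Kodaira computation for $\tilde f \coloneqq e^{-\psi} f$ on the Hermitian manifold $(M,h)$: the hypothesis \cref{cond-torsion} is tailored precisely to absorb, via one Cauchy--Schwarz, both the contribution of $\trace_{\omega_h}(i\partial\dbar\psi)$ and the square of the first-order error produced by the torsion $1$-form $\theta$ (the factor $\tfrac{1}{8}$ arising from that Cauchy--Schwarz). Combining the mean value inequality with the $L^2$ estimate already proved, and absorbing the source contribution into the $\int_{B(p,R)} |u|_h^2 d\mu$ factor on the right, yields \cref{pointwise-decay}.

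The main technical obstacle is the Bochner--Kodaira step in the last paragraph: on a non-K\"ahler manifold the Chern and Levi--Civita connections diverge by the torsion, producing first-order error terms in the differential identity for $\Delta_g|\tilde f|^2$ whose control requires exactly the pairing $\trace_{\omega_h}(i\partial\dbar\psi) + \tfrac{1}{8}|\theta|_h^2$ of \cref{cond-torsion}. The Agmon step itself is routine once the pointwise Lagrange identity is spotted---without it one would lose the sharp factor $\sqrt{2}$ and obtain only $\gamma < 2b$.
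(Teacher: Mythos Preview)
Your $L^2$ argument is correct and in fact slightly cleaner than the paper's. The paper proceeds in two stages: first an IMS-type localization formula $\energ(\chi v)=\Re\int\langle\boxop v,\chi^2 v\rangle_h\,d\mu+\int|\dbar\chi|_h^2|v|_h^2\,d\mu$ is used with a weight $\chi e^{a\widetilde d}$ that vanishes on $\supp u$, yielding exponential decay of $\int_{B(q,2R)}|v|_h^2\,d\mu$; then a separate Caccioppoli inequality converts this into decay of $\int_{B(q,R)}|f|^2\,d\mu$. You bypass the Caccioppoli step by exploiting $\dbar v=0$ (which follows from $\dbar u=0$) in the pairing, so that the only surviving energy term is $\|\dbarstar(e^\phi v)\|^2$; combined with the Lagrange identity this yields the bound $\|e^\phi f\|\le\|e^\phi u\|/(b-|\dbar\phi|_\infty)$ directly. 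Both routes hinge on the same pointwise identity $|\dbar\phi\wedge v|_h^2+|\langle v,\dbar\phi\rangle_h|^2=|\dbar\phi|_h^2|v|_h^2$, but yours packages the two steps into one. The price is that your argument uses $\dbar u=0$, whereas the paper's version of the $L^2$ bound holds for arbitrary compactly supported $u$.

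There is, however, a genuine gap in your pointwise step. You claim that $w=|f|^2e^{-2\psi}$ satisfies a global differential inequality $\Delta_g w+K_0 w\ge -C_0|u|_h^2e^{-2\psi}$ with the source controlled pointwise by $|u|_h^2$. This is false: once $\dbar f=u\ne 0$, the computation of $\trace_{\omega_h}(i\partial\dbar w)$ produces terms of the form $\bar f\,h^{j\bar k}\partial_j u_{\bar k}\,e^{-2\psi}$ (and their conjugates), i.e.\ first derivatives of $u$, which cannot be bounded pointwise by $|u|_h^2$ and cannot be absorbed by Cauchy--Schwarz into the good gradient term. No Bochner--Kodaira identity will rescue this, because the obstruction is simply that a pointwise subsolution inequality with zeroth-order source requires $f$ to be holomorphic.

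The fix is immediate and is exactly what the paper does: observe that $f$ is holomorphic on $M\setminus B(p,R)$, so for any $q$ with $d(p,q)>3R$ one has $\dbar f=0$ on $B(q,2R)$, and then the computation you describe (Chern versus Levi--Civita Laplacian, with the torsion $1$-form $\theta$ producing the $\tfrac18|\theta|_h^2$ term after one Cauchy--Schwarz) gives $\Delta_g w\ge -4\lambda w$ on $B(q,2R)$ with $\lambda=B$. The Li--Schoen--Tam inequality then applies verbatim and yields \cref{pointwise-decay}. For $d(p,q)\le 3R$ there is nothing to prove beyond absorbing $e^{3\gamma R}$ into $C$; in fact the pointwise statement has content only where $f$ is holomorphic, since for merely $L^2$ data $u$ the value $f(q)$ is not even defined on $\supp u$. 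So drop the source-term formulation and restrict the subsolution argument to balls disjoint from $B(p,R)$.
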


Notice that the function $f$ of the statement is the solution of the equation $\dbar f=u$ with minimal $L^2(M,\mu)$ norm (see \cref{sec:coerc} below), that is, the so-called canonical solution.\newline 
The first half of \cref{thm-2} states that, under the sole geometric assumption of completeness of $(M,h)$, coercivity of $\boxop$ implies the $L^2$ exponential decay \cref{L2-decay} of $\dbar^*\boxop^{-1}u$ off the support of $u$. Its proof occupies \cref{sec:dbar} and is based on a method developed by Agmon to establish exponential decay of eigenfunctions of Schr\"odinger operators (see, e.g., \cite{agmon}). The key observation is that $\boxop$ satisfies a \emph{localization formula} analogous to the simple yet very effective IMS localization formula of Schr\"odinger operators (see \cref{sec:loc}). 

In a second step, accomplished in \cref{sec:pointwise}, we improve the $L^2$ decay to an $L^\infty$ decay, exploiting a mean value inequality for nonnegative subsolutions of the heat equation on Riemannian manifolds due to Li and Tam~\cite{li--tam} (but see also \cite{li--schoen}), which holds under a lower bound on the Ricci curvature. To apply this inequality in the Hermitian context, we need to control the difference between the Laplacian of the background Riemannian metric and the Laplacian of the Chern connection, which may be expressed in terms of the torsion and ultimately leads to condition~\cref{cond-torsion}. Thanks to this mean value inequality, we can avoid the ``Kerzman trick'' (as in \cite{delin} and \cite{dallara}) and the ``pluriharmonic recentering of the weight'' techniques (as in \cite{schuster--varolin}). These methods are difficult to implement on manifolds without some sort of ``bounded geometry'' assumptions.

The analysis just sketched has a conditional nature, resting on the assumption that $\boxop$ is coercive (condition \emph{(1)} in \cref{thm-1}). This hypothesis is made more transparent by a ``basic identity with torsion term'' (\cref{prop:basic}), thanks to which we can give a sufficient condition for coercivity that involves only the geometry of the Hermitian metric and the curvature form of the measure (inequality~\cref{suff-cond}). As evidence of the interest of basic identities involving a torsion term, we show in an Appendix that the ``twisted basic identities'' of the kind discussed, e.g., in Section 3 of \cite{mcneal--varolin}, can be thought of as ``standard'' basic identities with respect to conformally K\"ahler metrics. 

The last two sections of the paper (\cref{sec:conclusion} and \cref{sec:ach}) contain the deduction of \cref{thm-1} and \cref{cor3} from \cref{thm-2}, and a discussion of the interesting example of asymptotically complex hyperbolic metrics of Bergman-type.

As a final remark, let us point out that Bergman kernels can be fruitfully defined in the more general setting where holomorphic functions are replaced by holomorphic sections of a holomorphic line bundle on $M$ endowed with a Hermitian metric (see \cite{ma--marinescu/book} for a comprehensive treatment of this matter). Most of our techniques work in this more general framework, but we confine ourselves to the scalar setting for the sake of simplicity. 

\subsection{Further directions}

While the pointwise condition~\cref{suff-cond} is easy to check and sufficient to prove some interesting results, coercivity of $\boxop$ is expected to hold under much weaker conditions (cf. \cite{dallara/C2}). This is mainly due to the fact that, in loose terms, $\boxop$ is a generalized Schr\"odinger operator, as made apparent by the basic identity of \cref{prp:basic}. Condition~\cref{suff-cond} is morally a uniform positive lower bound on the ``potential'' of $\boxop$, while coercivity amounts to positivity of the minimal eigenvalue: in the case of ordinary Schr\"odinger operators it is well-known that the latter condition is much weaker (see, e.g., \cite{mazya--shubin}). This idea have an antecedent in \cite{christ} and is considered in \cite{dallara/C2}, but, to the authors' knowledge, has never been explored in the general context of Hermitian manifolds (but see Theorem~3 of \cite{devyver} for a Riemannian counterpart). 

We also believe that a better analytical understanding of the quadratic form of $\boxop$ would allow an improvement of \cref{cor:kahler} in the same vein as the result of \cite{schuster--varolin} for the unit ball (see the comment after \cref{cor:kahler}).

\subsection{Acknowledgements} The authors would like to thank Jeffery McNeal for an interesting discussion about the subject of this paper. 

\section{Preliminaries on Bergman kernels and the complex Laplacian on Hermitian manifolds with measure}
\subsection{Bergman spaces and Bergman kernels}\label{sec:bergman}
We recall that in the rather general setting of a complex manifold $M$ equipped with a positive Borel measure $\mu$, one may consider the \emph{Bergman space}
\be \bergman(M,\mu) \coloneqq \bigg\{f \colon M \to \Cplx : f \text{ is holomorphic and } \int_M |f|^2\,d\mu < \infty \bigg\},\ee
which is a linear subspace of $L^2(M,\mu)$. While in complete generality this is not the case, for many kind of measures the evaluation maps $f\mapsto f(p)$ are locally uniformly bounded linear functionals on $A^2(M,\mu)$, i.e., for every compact $K \subseteq M$ there is $C(K) <+\infty$ such that
\bel\label{admissible} |f(p)|^2 \leq C(K) \int_M |f|^2\,d\mu \qquad\forall f \in \bergman(M,\mu),\quad \forall p\in K.\eel
This condition is sometimes called \emph{admissibility} of the measure $\mu$ (see, e.g., \cite{pasternak-winiarski} and \cite{zeytuncu}). In this paper we restrict our attention to \emph{smooth positive} measures, that is, measures having smooth positive density with respect to Lebesgue measure in local coordinates. It is a simple consequence of the mean value property of holomorphic functions (in local holomorphic coordinates) that such measures always satisfy the admissibility condition~\cref{admissible}. In any case, under assumption \cref{admissible}, the Bergman space is closed in $L^2(M,\mu)$, so that the associated orthogonal projector \be B_\mu:L^2(M,\mu)\rightarrow A^2(M,\mu),\ee is well-defined, and in fact $A^2(M,\mu)$ is a \emph{reproducing kernel Hilbert space}. Explicitly, there is a function 
\be K_\mu \colon M \times M \to \Cplx,\ee which we call the \emph{Bergman kernel}, that satisfies the following properties:
\begin{enumerate}
 \item $K_\mu(\cdot,q)\in A^2(M,\mu)$ for every $q\in M$ and
  \be\label{eq:berman_kernel_variational}
	\|K_\mu(\cdot,q)\|_{L^2(\mu)}^2=K_\mu(q,q) =  \sup\big\{|f(q)|^2 : f \in \bergman(M,\mu) \text{ and } \|f\| \leq 1\big\};
	\ee
	\item\label{item:bergman_kernel_props_symmetry}
	$K_\mu(p,q) = \ol{K_\mu(q,p)}$;
	\item\label{item:bergman_kernel_props_projection}
	$K_\mu$ is the integral kernel of $B_\mu$:
	\be
	 B_\mu f(p) = \int_M K_\mu(p,q) f(q)\,d\mu(q). \qquad \forall f \in L^2(M,\mu),\quad\forall p \in M.
	\ee
\end{enumerate}
Moreover, the following Cauchy--Schwarz type inequality holds:
\begin{equation}
\label{cauchy-schwarz}
|K_\mu(p,q)| \leq \int_M |K_\mu(p,p')K_\mu(p',q)|\,d\mu(p') \leq \sqrt{K_\mu(p,p)\, K_\mu(q,q)}\qquad\forall p,q\in M.
\end{equation}
For proofs of these properties, see for instance \cite{bergman,pasternak-winiarski}.
%

\subsection{The complex Laplacian $\boxop$ and its coercivity}\label{sec:coerc}
%

A Hermitian manifold is a complex manifold $M$ endowed with a Hermitian metric \( h=h_{j\ol k}\,dz^j\otimes dz^{\ol k},\) where $h_{j\ol k}=h_{\ol k j}=\ol{h_{\ol j k}}$. The associated real $(1,1)$-form \(\omega_h\coloneqq ih_{j\ol k}\, dz^j\wedge dz^{\ol k}\) is called the fundamental form (or K\"ahler form) of $h$. As usual, we refer to both $h$ and $\omega_h$ as a metric on $M$. A Hermitian scalar product $\langle\cdot,\cdot\rangle_h$ is induced in the usual way on cotangent spaces: in particular, if $u=u_{\ol j}\, dz^{\ol j}$ and $v=v_{\ol j}\,dz^{\ol j}$ are $(0,1)$-forms, then $\langle u,v\rangle_h=h^{\ol j k}u_{\ol j} v_k$, where $[h^{\ol j k}]$ is the inverse matrix of $[h_{j\ol k}]$, and $v_k\coloneqq\ol{v_{\ol k}}$. This Hermitian scalar product can be extended to tensors of all ranks and our convention for the case of covariant tensors of rank $2$ is that
\bel\label{2-forms}
\langle\eta_1\otimes\eta_2,\eta_3\otimes\eta_4\rangle_h = \frac{1}{2}\langle\eta_1,\eta_3\rangle_h\langle\eta_2,\eta_4\rangle_h,
\eel
whenever the $\eta_k$'s are $1$-forms. The associated norms will be denoted by $|\cdot|_h$.

We identify differential forms with alternating tensors in such a way that $\eta_1\wedge\eta_2 \coloneqq \eta_1\otimes\eta_2-\eta_2\otimes\eta_1$, when $\eta_1$ and $\eta_2$ are $1$-forms. With this definition, if $u$ and $v$ are $(0,1)$-forms, we have
\bel\label{induced-metric}
|u\wedge v|_h^2 + |\langle u,v\rangle_h|^2 = |u|_h^2|v|_h^2.
\eel 

Suppose $\mu$ is a smooth positive measure on $M$ (we point out that most of the facts discussed below hold under much weaker regularity assumptions). We can define $L^2_{(0,q)}(M,h,\mu)$ as the Hilbert space of square-integrable $(0,q)$-forms with respect to $\mu$ and~$h$. More explicitly, if $u$ and $v$ are $(0,q)$-forms, the scalar product on $L^2_{(0,q)}(M,h,\mu)$ has the expression $\int_M\langle u,v\rangle_h\,d\mu$ anticipated in \cref{sec:intro}. We restrict our attention to $q\leq 2$ (recall convention~\cref{2-forms}). Observe that $L^2_{(0,0)}(M,h,\mu) = L^2(M,\mu)$ is the standard $L^2$-space of $\CC$-valued functions, defined with respect to the measure $\mu$.\newline
We define 
\be
\dom_q(\dbar)\coloneqq\big\{u\in L^2_{(0,q)}(M,h,\mu) : \dbar u \in L^2_{(0,q+1)}(M,h,\mu)\big\}, 
\ee 
where the $\dbar$ in the formula above is to be taken in the sense of distributions (or, more precisely, currents). It is clear that $\dbar$ defines an unbounded operator mapping $L^2_{(0,q)}(M,h,\mu)$ into $L^2_{(0,q+1)}(M,h,\mu)$, whose domain is $\dom_q(\dbar)$. This is called the \emph{weak extension} of the differential operator $\dbar$. We skip any reference in the notation to the degree of forms on which $\dbar$ acts, since this should always be clear from the context. Putting all the operators together, we get a \emph{weighted $\dbar$-complex on $(M,h)$}:
\bel\label{dbar-complex}
L^2(M,\mu)\stackrel{\dbar}\longrightarrow L^2_{(0,1)}(M,h,\mu)\stackrel{\dbar}\longrightarrow L^2_{(0,2)}(M,h,\mu)\stackrel{\dbar}\longrightarrow \cdots
\eel
Notice that the operators above are closed, so that \cref{dbar-complex} is a \emph{Hilbert complex} in the sense of \cite{bruning--lesch} (closure follows immediately from the fact that convergence in $L^2_{(0,q)}(M,h,\mu)$ implies convergence in the sense of currents). Thus, we have the dual complex \be
L^2(M,\mu)\stackrel{\dbar^*_{h,\mu}}\longleftarrow L^2_{(0,1)}(M,h,\mu)\stackrel{\dbar^*_{h,\mu}}\longleftarrow L^2_{(0,2)}(M,h,\mu)\stackrel{\dbar^*_{h,\mu}}\longleftarrow \cdots,
\ee where every $\dbar^*_{h,\mu}$ is the Hilbert space adjoint of the corresponding $\dbar$. We decided to use the slightly cumbersome notation $\dbar^*_{h,\mu}$ to stress the fact that not only the domains, but also the ``formulas'' of these first-order differential operators depend on the metric $h$ and the measure~$\mu$. \newline
We are finally in a position to define the complex Laplacian:
\be
\boxop^{(q)}\coloneqq\dbar\dbar^*_{h,\mu} + \dbar^*_{h,\mu}\dbar \qquad (1\leq q\leq n-1).
\ee
The operator $\boxop^{(q)}$ is self-adjoint and nonnegative when considered on the natural domain
\be
\dom(\boxop^{(q)})\coloneqq \big\{u\in \dom_q(\dbar)\cap \dom_q(\dbar^*_{h,\mu})\colon \dbar u \in \dom_{q+1}(\dbar^*_{h,\mu}),\ \dbar^*_{h,\mu} u \in \dom_{q-1}(\dbar)\big\},
\ee
where we used the obvious notation for the domains of the $\dbar^*_{h,\mu}$'s. One can analogously define $\boxop^{(0)} =\dbar^*_{h,\mu}\dbar$ and $\boxop^{(n)} =\dbar\dbar^*_{h,\mu}$. For the purposes of this paper, it is enough to consider the complex Laplacian for $q=1$, and we will consequently drop the superscript, putting $\boxop\coloneqq\boxop^{(1)}$. As usual, a key role is played by the quadratic form 
\be\label{e:energy}
\mathcal{E}_{h,\mu}(u,v)\coloneqq\int_M\langle\dbar u,\dbar v\rangle_h\,d\mu + \int_M\dbar^*_{h,\mu} u\cdot \overline{\dbar^*_{h,\mu} v}\,d\mu,
\ee
which is well-defined whenever $u,v\in \dom_1(\dbar)\cap \dom_1(\dbar^*_{h,\mu})\eqqcolon\dom(\mathcal{E}_{h,\mu})$. Notice that $\dbar^*_{h,\mu} u$ is a scalar function, while $\dbar u$ is a $(0,2)$-form. We adopt the convention that $\mathcal{E}_{h,\mu}(u)\coloneqq\mathcal{E}_{h,\mu}(u,u)$. By definition,
\be
	\mathcal{E}_{h,\mu}(u,v) = \int_M\langle\boxop u,v\rangle_h \, d\mu
\ee
if $u\in \dom(\boxop)$ and $v\in \dom(\mathcal{E}_{h,\mu})$.

Our first restriction on the metric $h$ is justified by the following proposition.
\begin{prop}\label{prp:core}
	If the Hermitian metric $h$ is complete, the space $\mathcal{D}_{(0,1)}$ of smooth compactly supported $(0,1)$-forms is dense in $\dom(\mathcal{E}_{h,\mu})$ with respect to the graph norm. It is also a core of $\boxop$, and the restriction of $\boxop$ to $\mathcal D_{(0,1)}$ is essentially self-adjoint.
\end{prop}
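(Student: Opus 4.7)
The plan is to follow the classical Andreotti--Vesentini cutoff-and-mollify strategy, adapted to the weighted setting $(M,h,\mu)$. Completeness of the Riemannian metric $g=2\Re h$ provides a family of smooth cutoffs $\chi_R\in C_c^\infty(M)$, $R\geq 1$, with $0\leq\chi_R\leq 1$, $\chi_R\equiv 1$ on $B(p_0,R)$, $\supp\chi_R\subset B(p_0,2R)$, and $|d\chi_R|_h\leq C/R$; such cutoffs arise by composing a one-variable bump with a smoothed version of $d(p_0,\cdot)$, which by completeness is proper and $1$-Lipschitz.

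Given $u\in\dom(\energ)$, I approximate it by $\chi_R u$. The commutator identities $\dbar(\chi_R u)=\chi_R\dbar u+\dbar\chi_R\wedge u$ and $\dbarstar(\chi_R u)=\chi_R\dbarstar u+[\dbarstar,\chi_R]u$, where $[\dbarstar,\chi_R]$ is a zeroth-order multiplication operator with pointwise norm bounded by a constant multiple of $|d\chi_R|_h$, combined with dominated convergence, yield $\chi_R u\to u$ in the form graph norm. This reduces the density statement to the case of compactly supported forms. Each compactly supported element of $\dom(\energ)$ is then approximated by smooth compactly supported forms using a partition of unity subordinate to a finite cover by holomorphic coordinate charts, followed by standard Friedrichs mollification in each chart. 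The Friedrichs commutator lemma ensures that mollification approximately commutes with the first-order operators $\dbar$ and $\dbarstar$ in $L^2$, and a diagonal argument completes the proof that $\mathcal{D}_{(0,1)}$ is dense in $\dom(\energ)$ in the graph norm.

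For the operator core property and essential self-adjointness, I appeal to von Neumann's criterion: since $\boxop|_{\mathcal{D}_{(0,1)}}$ is symmetric and nonnegative, it suffices to show that $(\boxop+1)\mathcal{D}_{(0,1)}$ has dense range in $L^2_{(0,1)}(M,h,\mu)$. Suppose $v\in L^2_{(0,1)}$ is orthogonal to this range, so that $v$ is a distributional solution of $(\boxop+1)v=0$. Local elliptic regularity of the second-order system $\boxop+1$ on $(0,1)$-forms makes $v$ smooth. A Gaffney-type argument then applies: after a preliminary mollification (exactly as in the density argument above) one may legitimately pair the weak equation with $\chi_R^2 v$; the resulting identity has the schematic shape $\int\chi_R^2(|\dbar v|_h^2+|\dbarstar v|^2+|v|^2)\,d\mu = (\text{commutators})$, where every commutator carries at least one factor of $|d\chi_R|_h$ and is absorbed via Cauchy--Schwarz. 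Using $|d\chi_R|_h\leq C/R$ and letting $R\to\infty$ forces $v\equiv 0$. This yields the essential self-adjointness of $\boxop|_{\mathcal{D}_{(0,1)}}$, and $\mathcal{D}_{(0,1)}$ is then automatically a core for the self-adjoint closure $\boxop$.

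The main obstacle is the last step: one has to justify rigorously that $\chi_R^2 v$ (or a mollified version thereof) is an admissible test form in the distributional pairing defining orthogonality to $(\boxop+1)\mathcal{D}_{(0,1)}$, and to carry out the commutator bookkeeping $[\boxop,\chi_R^2]=O(|d\chi_R|_h+|d\chi_R|_h^2)$ in a way that the leading order term on the left-hand side dominates. The uniform decay $|d\chi_R|_h\leq C/R$ from the cutoff construction is exactly what makes the commutators vanish in the limit, and this is the precise point at which the completeness hypothesis on $(M,h)$ is indispensable.
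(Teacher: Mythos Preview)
Your sketch is correct and follows the classical Andreotti--Vesentini/Gaffney route. The paper, however, does not reprove any of this: its entire argument is a citation to Ma--Marinescu's book, together with the observation that the weighted measure $\mu=e^{-2\psi}\vol$ can be absorbed into the fiber metric $e^{-2\psi}$ on the trivial line bundle, thereby reducing to the standard case already treated in the literature.

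So the difference is one of scope rather than mathematics: you outline the actual proof (cutoffs from completeness, Friedrichs mollification, elliptic regularity plus a Gaffney energy estimate for essential self-adjointness), while the paper simply points to where this argument is recorded and explains why the extra weight $e^{-2\psi}$ causes no new difficulty. Your approach is self-contained and makes explicit where completeness enters; the paper's approach is more economical and highlights the line-bundle viewpoint, which is conceptually useful elsewhere. One minor remark: once elliptic regularity gives $v$ smooth, the test form $\chi_R^2 v$ already lies in $\mathcal{D}_{(0,1)}$, so no preliminary mollification is needed before pairing---the orthogonality hypothesis applies directly.
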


\begin{proof}
 See for instance \cite{ma--marinescu/book}.
 The fact that we do not use the measure induced by the Hermitian metric is of no consequence, since we may rewrite $\mu = e^{-2\psi}\vol$ and view $\mathcal E_{h,\mu}$ as the quadratic form of the complex Laplacian on $(M,h,\vol)$ for forms with values in the trivial line bundle on $M$, with fiber metric given by $e^{-2\psi}$.
\end{proof}

We say that $\boxop$ is \emph{$c$-coercive ($c>0$)} if $\boxop\geq c$ in the sense of quadratic forms or, equivalently, if 
\bel\label{coercivity}
	\energ(u)\geq c\int_M|u|_h^2\,d\mu\qquad\forall u\in \dom(\energ).
\eel
In view of \cref{prp:core}, it is enough that the inequality above holds for $u\in \mathcal{D}_{(0,1)}$. By standard functional analysis, whenever $\boxop$ is $c$-coercive there exists a bounded inverse $\boxop^{-1}$ with domain $L^2_{(0,1)}(M,h,\mu)$ and range $\dom(\boxop)$. The operator norm of $\boxop^{-1}$ is bounded from above by $c^{-1}$. Moreover, under assumption \cref{coercivity}, the $\dbar$-equation \be
\dbar f=u
\ee admits a unique solution orthogonal to the Bergman space $A^2(M,\mu)$, whenever the datum $u$ is in $L^2_{(0,1)}(M,h,\mu)$ and $\dbar$-closed, i.e., $\dbar u=0$. This solution may be expressed as
\be
f=\dbarstar\boxop^{-1}u,
\ee
and satisfies the bound \begin{equation}
\int_M|f|^2d\mu\leq c^{-1}\int_M|u|_h^2d\mu.\label{can_sol_bound}
\end{equation}
For our purposes, the most important consequence of this formula is the well-known \emph{Kohn's identity} for the Bergman projection:
\be\label{eq:kohns_identity}
B_\mu(f) = f-\dbarstar\boxop^{-1}\dbar f\qquad \text{for all } f\in\dom_0(\dbar).
\ee
Notice that while the terms appearing on the right hand side of this identity depend on the metric $h$, the left hand side depends only on $\mu$. It is this asymmetry that gives us the freedom to choose, given $\mu$, the most appropriate metric, e.g., one that makes $\boxop$ coercive (if it exists).\newline 
See, e.g., \cite{straube,berger} for proofs of the well-known facts just discussed.

\section{$L^2$ exponential decay of canonical solutions of the $\bar{\partial}$-equation}\label{sec:dbar}

The goal of this section is to prove the first half of \cref{thm-2}, that is, \cref{coercexp-thm} below. In order to do that, we need a localization lemma and a Caccioppoli-type inequality.

\subsection{A localization formula for $\boxop$}\label{sec:loc}

\cref{local-lem} below is a \emph{localization formula} for $\boxop$ that is analogous to the very useful {IMS localization formula} in the theory of Schr\"odinger operators. For the latter see, e.g., Lemma~3.1 of \cite{simon} or Lemma~11.3 of \cite{teschl}. Before stating and proving it, we need a few preliminaries. 

First, notice that if $\Lip(M,h)$ is the class of scalar functions $\chi\colon M\rightarrow \RR$ that are Lipschitz with respect to the Riemannian distance, then by Rademacher's theorem, $\chi$ is almost everywhere differentiable and 
\be
	|\dbar \chi|_h^2=\tfrac{1}{2}|d\chi|_h^2\leq \tfrac{1}{2}L^2,
\ee 
where $L$ is the Lipschitz constant of $\chi$.

Next, we state the Leibniz rule for $\dbarstar$ for future reference. For this, we employ the notation $w\vee v$ for the \emph{interior product} of the forms $v$ and $w$ (with respect to $h$). This is the form defined by the condition\be
\langle w\vee v, u\rangle_h = \langle v, \overline{w}\wedge u\rangle_h,
\ee where $u$ is an arbitrary form. Observe that the conjugation on the right hand side makes the interior product bilinear. 

\begin{lem}
 If $\chi\in \Lip(M,h)\cap L^\infty(M)$ and $v\in \dom_q(\dbar^*_{h,\mu})$ ($1\leq q\leq n$), then $\chi v\in \dom_q(\dbar^*_{h,\mu})$ and 
\bel\label{leibniz}\dbar^*_{h,\mu}(\chi v)=\chi \dbar^*_{h,\mu} v - \partial \chi\vee v.\eel
\end{lem}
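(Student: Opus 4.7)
By the definition of the Hilbert-space adjoint, it suffices to show that for every $u$ in $\dom_{q-1}(\dbar)$,
\begin{equation*}
  \int_M \langle \chi v, \dbar u\rangle_h\,d\mu
  \;=\;
  \int_M \langle \chi\,\dbar^*_{h,\mu} v - \partial\chi\vee v,\,u\rangle_h\,d\mu,
\end{equation*}
because the right-hand side is an $L^2$-bounded linear functional of $u$ (indeed $\chi\dbar^*_{h,\mu}v\in L^2$ since $\chi\in L^\infty$, and $\partial\chi\vee v\in L^2$ since $|\partial\chi|_h$ is essentially bounded by the Lipschitz constant of $\chi$).

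\textbf{Key reductions.} First I would reduce to $u\in\mathcal D_{(0,q-1)}$ by density in the graph norm, which is exactly where the completeness assumption on $(M,h)$ enters (via the $(0,q-1)$-analogue of \cref{prp:core} applied to the $\dbar$-complex). For such smooth compactly supported $u$, the product $\chi u$ is a compactly supported Lipschitz form, so by the standard Leibniz rule for locally Sobolev functions
\begin{equation*}
  \dbar(\chi u) \;=\; \chi\,\dbar u + \dbar\chi\wedge u
\end{equation*}
holds almost everywhere. Both sides lie in $L^2_{(0,q)}(M,h,\mu)$ (since $\chi,\dbar\chi$ are bounded and $u,\dbar u$ are smooth and compactly supported), so $\chi u\in\dom_{q-1}(\dbar)$.

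\textbf{Assembling the identity.} Using that $\chi$ is real so it passes through $\langle\cdot,\cdot\rangle_h$, one writes
\begin{equation*}
  \int_M \langle \chi v,\dbar u\rangle_h\,d\mu
  = \int_M \langle v,\chi\dbar u\rangle_h\,d\mu
  = \int_M \langle v,\dbar(\chi u)\rangle_h\,d\mu - \int_M \langle v,\dbar\chi\wedge u\rangle_h\,d\mu.
\end{equation*}
The first term equals $\int_M\langle\dbar^*_{h,\mu}v,\chi u\rangle_h\,d\mu=\int_M\langle\chi\dbar^*_{h,\mu}v,u\rangle_h\,d\mu$ by the definition of $\dbar^*_{h,\mu}$ applied to the admissible test form $\chi u$. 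For the second term, the stated definition of the interior product with $w=\overline{\dbar\chi}=\partial\chi$ gives $\langle v,\dbar\chi\wedge u\rangle_h=\langle\partial\chi\vee v,u\rangle_h$ pointwise. Combining yields the desired identity on $\mathcal D_{(0,q-1)}$, and then on all of $\dom_{q-1}(\dbar)$ by the density step above.

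\textbf{Main obstacle.} The only non-routine point is the merely Lipschitz regularity of $\chi$, which forbids invoking the smooth Leibniz rule directly. This is handled, as indicated, by Rademacher's theorem and the distributional Leibniz rule for Sobolev functions; alternatively one could mollify $\chi$ to obtain $\chi_\epsilon\in C^\infty$ with uniformly bounded $L^\infty$ and Lipschitz norms, prove the identity for $\chi_\epsilon$, and pass to the limit via dominated convergence using $\dbar\chi_\epsilon\to\dbar\chi$ pointwise a.e.\ and $|\dbar\chi_\epsilon|_h\leq L/\sqrt{2}$. Either way, no geometric input beyond completeness is needed.
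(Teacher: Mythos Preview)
Your argument is correct and follows the same core idea as the paper's proof: use the Leibniz rule $\dbar(\chi u)=\chi\dbar u+\dbar\chi\wedge u$ together with the definition of the Hilbert-space adjoint. The only difference is that you take an unnecessary detour through a density reduction to $u\in\mathcal D_{(0,q-1)}$, invoking completeness of $(M,h)$. The paper works directly with an arbitrary $u\in\dom_{q-1}(\dbar)$: since $\chi$ is bounded and $\dbar\chi$ is essentially bounded (by the Lipschitz condition and Rademacher), one has $\chi u\in L^2_{(0,q-1)}$ and $\dbar(\chi u)=\chi\dbar u+\dbar\chi\wedge u\in L^2_{(0,q)}$ in the sense of currents, so $\chi u\in\dom_{q-1}(\dbar)$ with no approximation needed. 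This makes the lemma independent of completeness, and also avoids the slight mismatch in your citation of \cref{prp:core}, which concerns density in $\dom(\mathcal E_{h,\mu})=\dom_1(\dbar)\cap\dom_1(\dbar^*_{h,\mu})$ rather than in $\dom_{q-1}(\dbar)$ alone.
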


\begin{proof}
Let $u\in \dom_{q-1}(\dbar)$. Then $\dbar(\chi u)=\chi\dbar u + \ol{\partial\chi}\wedge u$ and the remark we made about the differentiability of Lipschitz functions implies immediately that $\chi u\in \dom_{q-1}(\dbar)$. Hence,\be
\int_M\langle u,\chi\dbarstar v\rangle_h\,d\mu = \int_M\langle\dbar(\chi u),v\rangle_h\,d\mu = \int_M\langle\dbar u,\chi v\rangle_h\,d\mu+\int_M\langle u,\partial\chi\vee v\rangle_h\,d\mu,
\ee which gives the thesis.
\end{proof}

\begin{lem}[Localization formula]\label{local-lem}
If $u\in\dom(\boxop)$ and $\chi\in\Lip(M,h)\cap L^\infty(M)$, then $\chi u\in\dom(\mathcal{E}_{h,\mu})$ and the following identity holds:
\be\label{locform}
\mathcal{E}_{h,\mu}(\chi u)=\Re\int_M\langle \boxop u,\chi^2 u\rangle_{h}\,d\mu + \int_M|\dbar\chi|_h^2 |u|_h^2\,d\mu.
\ee
\end{lem}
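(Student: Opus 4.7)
The plan is to expand $\energ(\chi u)$ via the Leibniz rules for $\dbar$ and $\dbarstar$, expand the pairing $\energ(u,\chi^2u)$ in the same way, and observe that the ``cross terms'' match up while the leftover ``square terms'' collapse to $|\dbar\chi|_h^2|u|_h^2$ by means of the Pythagorean-type identity \cref{induced-metric}.

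First I would verify $\chi u\in\dom(\energ)$. Since $\chi$ is bounded and Lipschitz, $\dbar(\chi u)=\chi\dbar u+\ol{\partial\chi}\wedge u$ (in the sense of currents) is in $L^2_{(0,2)}(M,h,\mu)$, and the Leibniz rule \cref{leibniz} shows $\dbarstar(\chi u)=\chi\dbarstar u-\partial\chi\vee u\in L^2(M,\mu)$. The same reasoning applies to $\chi^2u$, so $\chi^2u\in\dom(\energ)$ as well, and in particular
\be
 \energ(u,\chi^2u)=\int_M\langle\boxop u,\chi^2u\rangle_h\,d\mu
\ee
because $u\in\dom(\boxop)$.

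Next I would expand both sides. Using the two Leibniz rules,
\be
 |\dbar(\chi u)|_h^2=\chi^2|\dbar u|_h^2+2\chi\Re\langle\dbar u,\ol{\partial\chi}\wedge u\rangle_h+|\ol{\partial\chi}\wedge u|_h^2,
\ee
\be
 |\dbarstar(\chi u)|^2=\chi^2|\dbarstar u|^2-2\chi\Re\bigl(\dbarstar u\cdot\ol{\partial\chi\vee u}\bigr)+|\partial\chi\vee u|^2.
\ee
A parallel computation for $\dbar(\chi^2 u)$ and $\dbarstar(\chi^2u)$ yields
\begin{align*}
 \Re\energ(u,\chi^2u)
 &=\int_M\chi^2\bigl(|\dbar u|_h^2+|\dbarstar u|^2\bigr)\,d\mu\\
 &\qquad+2\Re\int_M\chi\bigl[\langle\dbar u,\ol{\partial\chi}\wedge u\rangle_h-\dbarstar u\cdot\ol{\partial\chi\vee u}\bigr]\,d\mu,
\end{align*}
so summing the two displays above and subtracting this real part,
\be
 \energ(\chi u)-\Re\energ(u,\chi^2u)=\int_M\bigl(|\ol{\partial\chi}\wedge u|_h^2+|\partial\chi\vee u|^2\bigr)\,d\mu.
\ee
All the cross terms cancel; this is the main bookkeeping step.

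The remaining task is the pointwise identity
\bel\label{plan:pyth}
 |\ol{\partial\chi}\wedge u|_h^2+|\partial\chi\vee u|^2=|\dbar\chi|_h^2\,|u|_h^2.
\eel
Here I would use the defining property of the interior product, together with $\chi$ real, to recognize $\partial\chi\vee u=\langle u,\ol{\partial\chi}\rangle_h$ (a scalar), so that $|\partial\chi\vee u|^2=|\langle\ol{\partial\chi},u\rangle_h|^2$. Applying \cref{induced-metric} to the $(0,1)$-forms $\ol{\partial\chi}$ and $u$ then gives \cref{plan:pyth} (using $|\ol{\partial\chi}|_h=|\dbar\chi|_h$). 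Combining everything yields \cref{locform}.

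The step I expect to be the main (small) obstacle is the clean identification of $\partial\chi\vee u$ with $\langle u,\ol{\partial\chi}\rangle_h$ and the verification that $|\ol{\partial\chi}\wedge u|_h^2+|\partial\chi\vee u|^2$ reduces exactly to $|\dbar\chi|_h^2|u|_h^2$; everything else is routine manipulation with the Leibniz rules. All arguments only use boundedness and a.e.\ differentiability of $\chi$ (via Rademacher), so no additional regularity hypothesis is needed beyond $\chi\in\Lip(M,h)\cap L^\infty(M)$.
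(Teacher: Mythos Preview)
Your argument is correct. Both you and the paper arrive at the same penultimate identity
\[
 \energ(\chi u)=\Re\int_M\langle \boxop u,\chi^2 u\rangle_{h}\,d\mu + \int_M\bigl(|\partial\chi\vee u|^2 + |\dbar\chi\wedge u|_h^2\bigr)\,d\mu,
\]
and finish with the same appeal to \cref{induced-metric} and the identification $\partial\chi\vee u=\langle u,\dbar\chi\rangle_h$. The route, however, differs. The paper computes the iterated commutator $[\chi,[\chi,\boxop]]$ in two ways (the classical IMS device from Schr\"odinger theory), first working with smooth compactly supported $u$ and $\chi$ and then invoking \cref{prp:core} to pass to the general case. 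You instead expand $\energ(\chi u)$ and $\Re\energ(u,\chi^2u)$ directly via the Leibniz rules for $\dbar$ and $\dbarstar$ and observe that the cross terms cancel. Your approach is slightly more elementary---no operator-commutator bookkeeping---and, because the Leibniz rule \cref{leibniz} is already stated for $\chi\in\Lip(M,h)\cap L^\infty(M)$ and $u\in\dom(\dbarstar)$, it runs directly at the stated level of generality without a separate density step. The commutator viewpoint, on the other hand, makes the analogy with Schr\"odinger localization transparent, which is the conceptual point the paper wants to emphasize.
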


\begin{proof}
Exactly as in the proof of Lemma 3.1 of \cite{simon}, we compute in two ways the iterated commutator $[\chi,[\chi,\boxop]]$, where $\chi$ is identified with a multiplication operator. We will use \cref{leibniz} a few times without comment. All the computations below are for $u$ and $\chi$ smooth and compactly supported, the statement then follows appealing to \cref{prp:core}. We have
\begin{align}
	\big[\chi,\dbar\dbar^*_{h,\mu}\big] u 
	&= \chi\dbar\dbar^*_{h,\mu}u - \dbar\dbar^*_{h,\mu}(\chi u) \notag \\
	&= \chi\dbar\dbar^*_{h,\mu}u - \dbar(\chi \dbar^*_{h,\mu} u-\partial \chi\vee u) \notag \\
	&=\dbar(\partial \chi\vee u) - \dbar\chi\wedge\dbar^*_{h,\mu}u.
\end{align}
Thus, 
\be
	\big[\chi,\big[\chi,\dbar\dbar^*_{h,\mu}\big]\big]u
	= -2\dbar\chi\wedge\big(\partial\chi\vee\dbar^*_{h,\mu}u\big).
\ee
Analogously, we get
\begin{align}
	\big[\chi,\dbar^*_{h,\mu}\dbar\big] u 
	& = \chi \dbar^*_{h,\mu}\dbar u-\dbar^*_{h,\mu}\dbar(\chi u)\notag \\
	& = \chi \dbar^*_{h,\mu}\dbar u-\dbar^*_{h,\mu}(\dbar\chi\wedge u + \chi\dbar u) \notag \\
	& = -\dbar^*_{h,\mu}(\dbar\chi\wedge u) + \partial\chi\vee\dbar u,
\end{align}
and
\be
	\big[\chi,\big[\chi,\dbar^*_{h,\mu}\dbar\big]\big]u =-2\partial \chi\vee(\dbar\chi\wedge u).
\ee 
Putting everything together, we get, for all $ u\in \dom(\boxop)$,
\be
	-\frac{1}{2}\left[\chi,[\chi,\boxop]\right]u = \dbar\chi\wedge\big(\partial\chi\vee\dbar^*_{h,\mu}u\big)+\partial \chi\vee(\dbar\chi\wedge u).
\ee
On the other hand, we can easily see that
\be
	-\frac{1}{2}[\chi,[\chi,\boxop]]u = \chi\boxop(\chi u ) - \frac{\chi^2\boxop u + \boxop(\chi^2 u)}{2}.
\ee 
Combining the two identities we get
\begin{align}
	\mathcal{E}_{h,\mu}(\chi u)
	& =\int_M\langle \boxop (\chi u),\chi u\rangle_{h}\,d\mu \notag \\
	& = \Re\int_M\langle \boxop u,\chi^2 u\rangle_{h}\,d\mu + \int_M\left(|\partial\chi\vee u|_h^2 + |\dbar\chi\wedge u|_h^2\right)d\mu.
\end{align}
Then \cref{locform} follows by observing that $\partial\chi\vee u = \langle u,\dbar \chi\rangle_h$ and recalling \cref{induced-metric}.
\end{proof} 

\subsection{Caccioppoli-type inequality for $\boxop$-harmonic $(0,1)$-forms}

\begin{prop}\label{cacciop-lem}
Let $u\in\dom(\boxop)$ be such that $\boxop u=0$ on a geodesic ball $B(p,R)$. Then, for every $R'<R$, \be
\int_{B(p,R')} |\dbar^*_{h,\mu}u|^2\,d\mu\leq \frac{2}{(R-R')^2}\int_{B(p,R)}|u|_h^2\,d\mu.
\ee
\end{prop}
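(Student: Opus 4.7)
The plan is to apply the localization formula (Lemma~3.1) with a cutoff function adapted to the pair of balls $B(p,R') \subset B(p,R)$. Completeness of $(M,h)$ ensures the distance function $d(p,\cdot)$ is globally $1$-Lipschitz, so I define
\[
 \chi(x) \coloneqq \min\!\Bigl\{1,\; \tfrac{1}{R-R'}\max\bigl(R - d(p,x),\,0\bigr)\Bigr\},
\]
which lies in $\Lip(M,h) \cap L^\infty(M)$, is identically $1$ on $B(p,R')$, vanishes outside $B(p,R)$, and has Lipschitz constant $1/(R-R')$. In particular $|\dbar \chi|_h^2 \leq \tfrac{1}{2}(R-R')^{-2}$ almost everywhere.

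Next I would plug $\chi$ and $u$ into the localization formula \cref{locform}. Since $\supp \chi \subseteq B(p,R)$ and $\boxop u = 0$ there, the term $\Re\int_M \langle \boxop u, \chi^2 u\rangle_h \, d\mu$ vanishes, leaving
\[
 \energ(\chi u) \;=\; \int_M |\dbar\chi|_h^2\,|u|_h^2\,d\mu \;\leq\; \frac{1}{2(R-R')^2}\int_{B(p,R)}|u|_h^2\,d\mu.
\]
The third step is to extract the desired $\dbarstar u$ bound from $\energ(\chi u)$. By definition,
\[
 \energ(\chi u) \;\geq\; \int_M |\dbarstar(\chi u)|^2\,d\mu.
\]
Now I apply the Leibniz rule \cref{leibniz}: $\dbarstar(\chi u) = \chi\,\dbarstar u - \partial\chi \vee u$. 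On $B(p,R')$ we have $\chi \equiv 1$ and hence $\partial \chi \vee u = 0$, so $\dbarstar(\chi u) = \dbarstar u$ pointwise on $B(p,R')$. Therefore
\[
 \int_{B(p,R')} |\dbarstar u|^2\,d\mu \;\leq\; \int_M |\dbarstar(\chi u)|^2\,d\mu \;\leq\; \energ(\chi u) \;\leq\; \frac{1}{2(R-R')^2}\int_{B(p,R)}|u|_h^2\,d\mu,
\]
which is even sharper than the stated constant $2/(R-R')^2$.

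There is no serious obstacle here: the entire computation rides on the localization formula \cref{local-lem}, which already handles the only delicate point, namely that a Lipschitz (not smooth) cutoff is enough to move $\chi$ past $\boxop$. The one thing to verify carefully is the cancellation of $\partial\chi \vee u$ on the inner ball, which justifies passing from $\dbarstar(\chi u)$ back to $\dbarstar u$ without further error terms.
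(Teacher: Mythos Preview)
Your proof is correct and follows essentially the same route as the paper: choose a Lipschitz cutoff adapted to $B(p,R')\subset B(p,R)$, apply the localization formula \cref{local-lem}, and use the Leibniz rule \cref{leibniz}. The one difference is that the paper expands $\dbarstar(\chi u)=\chi\,\dbarstar u-\partial\chi\vee u$ globally and then uses the triangle inequality (picking up a factor~$4$), whereas you observe that $\partial\chi$ vanishes a.e.\ on $B(p,R')$ and restrict to the inner ball directly; this yields the sharper constant $\tfrac{1}{2}(R-R')^{-2}$ in place of $2(R-R')^{-2}$, but the argument is otherwise identical.
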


\begin{proof}
We define \be
\chi(q)\coloneqq\max\{1-(R-R')^{-1}d(q,B(p,R')),0\},
\ee
where $d$ and $B$ are the geodesic distance and balls associated to $h$, respectively. It is easy to see that $\chi\in \text{Lip}(M,h)\cap L^\infty(M)$ and that $\chi(q)>0$ holds exactly on $B(p,R)$, and that $|\dbar \chi|^2_h\leq (R-R')^{-2}/2$.

Applying the localization formula~\cref{locform} to $\chi u$ one immediately gets
\be
	\int_M |\dbar^*_{h,\mu}(\chi u)|^2\,d\mu\leq \mathcal{E}_{h,\mu}(\chi u)=\int_M|\dbar\chi|_h^2|u|_h^2\,d\mu.
\ee 
Recalling the Leibniz rule~\cref{leibniz}, this gives \be
\int_M |\dbar^*_{h,\mu}u|^2\chi^2\,d\mu\leq 4\int_M|\dbar\chi|_h^2|u|_h^2\,d\mu\leq \frac{2}{(R-R')^2}\int_M|u|_h^2\,d\mu.
\ee Since $\chi\equiv1$ on $B(p,R')$, we are done.
\end{proof}

\subsection{Coercivity implies $L^2$ exponential decay of canonical solutions}
 
\begin{thm}\label{coercexp-thm}
 Assume that $\boxop$ is $b^2$-coercive for some $b>0$, i.e., that \cref{coercivity} holds.
Then for every $\gamma<2\sqrt{2}b$ and $R >0$, there exists a constant $C_{\gamma,R,b}$ such that if $u\in L^2_{0,1}(M,h,\mu)$ is supported on $B(p,R)$ and $f\coloneqq\dbarstar\boxop^{-1}u$, then
\be\label{e:l2decaycan}
	\int_{B(q,R)}|f|^2\,d\mu\leq C_{\gamma,R,b} \, e^{-\gamma d(p,q)}\int_{B(p,R)}|u|^2_h\,d\mu
\ee
holds for every $q \in M$.
\end{thm}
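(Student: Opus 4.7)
The plan is to prove weighted exponential $L^2$ decay for $v := \boxop^{-1} u$ by Agmon's method, and then transfer the decay to $f = \dbarstar v$ using the Caccioppoli-type inequality \cref{cacciop-lem}. The key observation enabling both steps is that $\boxop v = u$ is supported in $B(p, R)$, so $v$ is $\boxop$-harmonic on the complement of $B(p, R)$, which is precisely the hypothesis needed for \cref{cacciop-lem}.

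For the weighted estimate, I would fix $\alpha \in (0, \sqrt{2}\,b)$ and introduce the truncated Agmon weight $\chi_N(q) := \min(e^{\alpha d(p, q)}, N)$ for large $N$. Since $d(p, \cdot)$ is $1$-Lipschitz, $\chi_N$ belongs to $\Lip(M, h) \cap L^\infty(M)$ and satisfies $|\dbar \chi_N|_h^2 \leq \tfrac{\alpha^2}{2} \chi_N^2$ almost everywhere. Applying \cref{local-lem} to $\chi_N v$ gives
\[
\energ(\chi_N v) = \Re \int_M \langle u, \chi_N^2 v\rangle_h\, d\mu + \int_M |\dbar \chi_N|_h^2 |v|_h^2\, d\mu.
\]
Using $b^2$-coercivity on the left-hand side together with Cauchy--Schwarz and the bound $\chi_N \leq e^{\alpha R}$ on $\supp u$ on the right-hand side, then absorbing the gradient term, I obtain
\[
\bigl(b^2 - \tfrac{\alpha^2}{2}\bigr)\|\chi_N v\|_{L^2(\mu)} \leq e^{\alpha R}\|u\|_{L^2(\mu)}.
\]
Letting $N \to \infty$ by monotone convergence yields the clean weighted estimate $\|e^{\alpha d(p, \cdot)} v\|_{L^2(\mu)} \leq C_1(\alpha, b, R)\,\|u\|_{L^2(\mu)}$.

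To conclude, fix $q \in M$. If $d(p, q) \geq 3R$, then $B(q, 2R)$ is disjoint from $\supp(\boxop v) \subseteq B(p,R)$, so \cref{cacciop-lem} with inner radius $R$ and outer radius $2R$ yields $\int_{B(q, R)} |f|^2\, d\mu \leq \tfrac{2}{R^2}\int_{B(q, 2R)} |v|^2\, d\mu$. On $B(q, 2R)$ the triangle inequality gives $d(p, \cdot) \geq d(p, q) - 2R$, hence
\[
\int_{B(q, 2R)} |v|^2\, d\mu \leq e^{-2\alpha(d(p, q) - 2R)}\, \|e^{\alpha d(p, \cdot)} v\|^2_{L^2(\mu)} \leq C\, e^{-2\alpha d(p, q)}\, \|u\|^2_{L^2(\mu)}.
\]
Setting $\gamma := 2\alpha$, which ranges over $(0, 2\sqrt{2}\,b)$ as $\alpha$ does over $(0,\sqrt{2}\,b)$, establishes \cref{e:l2decaycan} in this regime; for $d(p, q) < 3R$ the claim follows from the coercivity \emph{a priori} bound $\|f\|_{L^2}^2 \leq b^{-2}\|u\|_{L^2}^2$ after absorbing the harmless factor $e^{3\gamma R}$ into $C_{\gamma, R, b}$.

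The main obstacle is the rigorous execution of the Agmon step: \cref{local-lem} demands a bounded Lipschitz weight, so one cannot plug in $e^{\alpha d(p, \cdot)}$ directly, and the truncation $\chi_N$ followed by monotone convergence is essential. The constraint $\alpha < \sqrt{2}\,b$ (equivalently $\gamma < 2\sqrt{2}\,b$) is forced because the gradient contribution, bounded by $\tfrac{\alpha^2}{2}\|\chi_N v\|^2$, must be strictly absorbed into the coercivity term $b^2\|\chi_N v\|^2$; this is exactly the arithmetic reason for the decay rate stated in the theorem.
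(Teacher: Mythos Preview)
Your proof is correct and follows essentially the same strategy as the paper: Agmon-type weighted estimates via the localization formula (\cref{local-lem}) combined with the Caccioppoli inequality (\cref{cacciop-lem}), with the constraint $\alpha<\sqrt{2}\,b$ arising for exactly the reason you identify.

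The only technical difference is in how the bounded-weight hypothesis of \cref{local-lem} is met. You truncate the Agmon weight as $\chi_N=\min(e^{\alpha d(p,\cdot)},N)$, handle the surviving cross term $\Re\int\langle u,\chi_N^2 v\rangle$ by Cauchy--Schwarz (using $\chi_N\le e^{\alpha R}$ on $\supp u$), and pass to the limit $N\to\infty$. The paper instead uses a product weight $\chi\,e^{a\widetilde d}$ where $\widetilde d=\min(d(p,\cdot),d(p,q))$ is already bounded and $\chi$ is a cutoff vanishing on $\supp u$, so the cross term disappears outright and no limiting argument is needed. Both are standard implementations of Agmon's method; your version is arguably a bit more streamlined (one auxiliary function instead of two), while the paper's avoids the monotone-convergence step. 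Neither choice affects the decay rate or the structure of the argument.
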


\begin{proof} By inequality \eqref{can_sol_bound} in \cref{sec:coerc}, under the coercivity condition \cref{coercivity} we have
\be
	\int_M |f|^2 \,d\mu \leq b^{-2} \int_{M}|u|^2_h\,d\mu.
\ee 	
In particular, \cref{e:l2decaycan} holds for $d(p,q) \leq 4R$ with $C_{\gamma,R,b} \geq e^{4\gamma R} \widetilde{C}$.
Thus, without loss of generality, we may assume that $d(p,q)\geq 4R$.\newline 
Since $u$ is supported on $B(p,R)$, we see that $\boxop^{-1}u$ is $\boxop$-harmonic on $B(q,2R)$. Thus, using \cref{cacciop-lem}, we obtain
\bel\label{coerc-1}
	\int_{B(q,R)}|f|^2\,d\mu\leq 2R^{-2}\int_{B(q,2R)}|\boxop^{-1}u|_h^2\,d\mu.
\eel
We introduce the functions
\be
	\widetilde{d}(p')\coloneqq\min\big\{d(p,p'),d(p,q)\big\},
\ee
and 
\be
	\chi(p')\coloneqq\min\big\{1,R^{-1}d(p',B(p,R))\big\}
\ee
for $p' \in M$.
Notice that $\widetilde{d}, \chi\in \lip$, so that we also have $\chi e^{a\widetilde{d}}\in\lip$ with $a>0$. Since $\boxop^{-1}u\in\dom(\boxop)$, we can apply \cref{local-lem} to get
\be
	\energ(\chi e^{a\widetilde{d}} \, \boxop^{-1}u)=\Re\int_M\langle u,\chi^2e^{2a\widetilde{d}}\,\boxop^{-1}u\rangle_{h}\,d\mu + \int_M|\dbar(\chi e^{a\widetilde{d}})|_h^2 \, |\boxop^{-1}u|_h^2\,d\mu.
\ee
Observe that $\chi$ was chosen to be $0$ on the support of $u$, and hence the first term on the right hand side vanishes. Recalling the coercivity condition~\cref{coercivity}, we obtain
\be
	b\sqrt{\int_M\chi^2e^{2a\widetilde{d}}\,|\boxop^{-1}u|_h^2\,d\mu}\leq \sqrt{\int_M|\dbar\chi|_h^2 \, e^{2a\widetilde{d}} \, |\boxop^{-1}u|_h^2\,d\mu} + a\sqrt{\int_M|\dbar\widetilde{d}|_h^2 \, \chi^2e^{2a\widetilde{d}}\, |\boxop^{-1}u|_h^2\,d\mu}.
\ee
The pointwise bound $|\dbar \widetilde{d}|_h^2\leq 1/2$ suggests that we choose $a<\sqrt{2}b$ and reabsorb the rightmost term. By support considerations and the bound $|\dbar\chi|_h^2\leq R^{-2}/2$, we finally get\be
\left(b-\frac{a}{\sqrt{2}}\right)^2\int_{B(q,2R)}e^{2a\widetilde{d}}|\boxop^{-1}u|_h^2\,d\mu\leq \frac{1}{2R^{2}}\int_{B(p,2R)}e^{2a\widetilde{d}} |\boxop^{-1}u|_h^2\,d\mu.
\ee
Our choice of $\widetilde{d}$ guarantees that this function is bounded from below by $d(p,q)-2R$ on $B(q,2R)$, and from above by $2R$ on $B(p,2R)$. Thus,
\be\label{e:boxin}
	\left(b-\frac{a}{\sqrt{2}}\right)^2\int_{B(q,2R)}|\boxop^{-1}u|_h^2\,d\mu\leq \frac{e^{8aR}}{2R^{2}}e^{-2ad(p,q)}\int_{B(p,2R)}|\boxop^{-1}u|_h^2\,d\mu.
\ee
To complete the proof we combine \cref{e:boxin,coerc-1}, and the observation that $\boxop^{-1}$ is bounded with operator norm at most $b^{-2}$, so that we have $\int_{B(p,2R)} |\boxop^{-1}u|_h^2\,d\mu \leq \int_M |\boxop^{-1}u|_h^2\,d\mu \leq b^{-4} \int_{B(p,R)} |u|_h^2\,d\mu$.
\end{proof}

\section{From $L^2$ to pointwise bounds}\label{sec:pointwise}
The key ingredient in the transition to pointwise bounds from the $L^2$-bounds of \cref{coercexp-thm} is the following result by Li--Schoen and Li--Tam.
\begin{thm}
\label{thm:litam}
	Let $(M,g)$ be a complete Riemannian manifold, $p\in M$ and $R>0 $ be such that the geodesic ball $B(p,2R)$ does not meet the boundary of $M$. Suppose that the Ricci curvature of $g$ is bounded below by $K$ with $K\leq 0$. Let $\delta \in (0,\frac{1}{2})$, $q > 0$, and $\lambda \geq 0$. Then there exists a constant $C$ that depends only on $\delta, q, \lambda R^2$, and $R\sqrt{-K}$ such that for any nonnegative smooth function $f$ on $B(p,2R)$ satisfying the differential inequality
	\begin{equation}
	\Delta_g f \geq -\lambda f
	\end{equation}
	we have
	\begin{equation}
	\sup_{B(p, (1-\delta)R)} f^q \leq \frac{C}{\vol(p,R)}\int _{B(p,R)} f^{q} \,\dvol_g.
	\end{equation}
\end{thm}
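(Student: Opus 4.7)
The plan is to prove this classical mean-value inequality by Moser iteration, with the key input being a local $L^2$--Sobolev inequality on geodesic balls whose constants depend only on the dimension $n=\dim M$ and the scale-invariant quantity $R\sqrt{-K}$. Throughout I will use $x$ rather than $p$ for ball centers inside $B(p,2R)$, to avoid collision with the $p$ of the statement.

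First I would establish the analytic prerequisites. The Bishop--Gromov volume comparison theorem, which uses only $\Ric_g\geq K$, gives volume doubling on $B(p,2R)$ with doubling constant controlled in terms of $n$ and $R\sqrt{-K}$. Combined with Buser's local Poincar\'e inequality (also a consequence of the Ricci lower bound), the Saloff-Coste machinery then produces a local $L^2$--Sobolev inequality of the form
\be
\Bigl(\vol(x,r)^{-1}\int_{B(x,r)}|\varphi|^{2\kappa}\,\dvol_g\Bigr)^{1/\kappa}
\leq C_S\,\vol(x,r)^{-1}\int_{B(x,r)}\bigl(r^2|\nabla\varphi|^2+|\varphi|^2\bigr)\,\dvol_g
\ee
for all $\varphi\in C_c^\infty(B(x,r))$ with $B(x,r)\subseteq B(p,2R)$, where $\kappa>1$ is a fixed exponent (e.g., $\kappa=n/(n-2)$ for $n\geq 3$) and $C_S$ depends only on $n$ and $R\sqrt{-K}$.

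Second, I would carry out the standard Moser iteration. For exponent $\alpha\geq 2$ and a Lipschitz cutoff $\eta$, testing the distributional inequality $\Delta_g f\geq -\lambda f$ against $f^{\alpha-1}\eta^2$ and integrating by parts yields a Caccioppoli-type bound for $\nabla(f^{\alpha/2}\eta)$ with constant proportional to $\alpha^2+\lambda r^2$. Feeding this into the Sobolev inequality above gives a reverse-H\"older iteration
\be
\Bigl(\vol(x,r')^{-1}\int_{B(x,r')} f^{\alpha\kappa}\,\dvol_g\Bigr)^{1/(\alpha\kappa)}
\leq
\Bigl(\frac{C(\alpha^2+\lambda r^2)}{(r-r')^2}\Bigr)^{1/\alpha}
\Bigl(\vol(x,r)^{-1}\int_{B(x,r)} f^\alpha\,\dvol_g\Bigr)^{1/\alpha}
\ee
over any concentric pair $r'<r$. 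Iterating along a geometric sequence of shrinking radii $r_j$ interpolating between $R$ and $(1-\delta)R$ and exponents $\alpha_j=2\kappa^j$, then summing the telescoping geometric product of constants, one obtains the stated bound for the range $q\geq 2$.

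The main obstacle, and the point at which the finer arguments of Li--Schoen and Li--Tam enter, is the extension to arbitrary $q>0$, in particular to the subcritical range $0<q<2$. Here the direct test-function computation breaks down because $f^{\alpha-1}$ loses the regularity required to be used as a test function once $\alpha<2$. The standard remedy is a self-improvement argument: once the $L^\infty$--$L^{q_0}$ bound is available on every ball $B(x,r)\subseteq B(p,R)$ for a single fixed exponent $q_0>0$ (for instance $q_0=2$), a covering argument combined with H\"older's inequality, applied on a nested family of balls with radii in $[(1-\delta)R, R]$, lets one trade a fractional power of the supremum against a lower $L^q$--moment. Summing the resulting geometric series and tracking the dependence on $\delta$, $q$, $\lambda R^2$, and $R\sqrt{-K}$ produces the inequality for all $q>0$, completing the proof.
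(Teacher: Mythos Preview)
The paper does not prove this result; it is quoted as a known fact, attributed to Corollary~3.6 of \cite{hga} and ultimately to the parabolic mean-value estimates of Li--Tam \cite{li--tam} (applied to the heat subsolution $(t,x)\mapsto e^{-\lambda t}f(x)$) and the elliptic $L^p$ estimates of Li--Schoen \cite{li--schoen}. Your Moser-iteration argument, based on the local Sobolev inequality obtained via Bishop--Gromov doubling, Buser's Poincar\'e inequality, and the Saloff-Coste machinery, is a correct and by now standard alternative route to the same estimate, with the same scale-invariant dependence of the constants on $\delta$, $q$, $\lambda R^2$, and $R\sqrt{-K}$ (the dimension being implicit). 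In spirit it is closest to the Li--Schoen approach; the main difference is that you package the geometric input as a black-box Sobolev inequality, whereas Li--Schoen work more directly with the gradient estimate. The parabolic route of Li--Tam handles all exponents $q>0$ in one stroke via space--time averaging, while your elliptic argument needs the separate self-improvement step you outline for the range $0<q<2$; both yield the theorem as stated.
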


This is essentially Corollary 3.6 of \cite{hga}, which follows easily from the results on subsolutions of the heat equation on Riemannian manifolds of \cite{li--tam}.

Here, the convention is that $\Delta_g$ is nonpositive. To apply this theorem, we will need to
compare the Riemannian Laplacian $\Delta_g f$, where $g\coloneqq2\Re h$, with the so-called Chern Laplacian $\trace_{\omega_h}(\Cplxi\partial\dbar f)$  of a regular function~$f$.
The comparison is well-known and is stated and proved in \cref{torsion-form-prp} below for convenience (cf. formula~(25) in \cite{gauduchon}).

%
%

\begin{prop}\label{torsion-form-prp} For a smooth function $f$ on the Hermitian manifold $(M,h)$, one has 
\be
\Delta_gf= 2\trace_{\omega_h}(\Cplxi\partial\dbar f)  +\langle d f,\theta\rangle_h,
\ee
where $\theta$ is the torsion 1-form defined by \eqref{torsion-1-form}.
\end{prop}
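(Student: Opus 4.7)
My plan is a direct computation in local holomorphic coordinates $(z^1,\ldots,z^n)$. The starting point is the standard coordinate expression for the Riemannian Laplacian,
$$
\Delta_g f = \frac{1}{\sqrt{\det g}}\,\partial_A\!\left(\sqrt{\det g}\;g^{AB}\,\partial_B f\right),
$$
where $A,B$ range over holomorphic and antiholomorphic indices. Under the convention $g=2\Re h$, one checks the identifications $g^{j\bar k}=g^{\bar k j}=h^{j\bar k}$ and $g^{jk}=g^{\bar j\bar k}=0$, together with $\sqrt{\det g} = 2^n\det(h_{a\bar b})$; the constant $2^n$ drops out of the Laplacian. Expanding the derivatives yields
$$
\Delta_g f = 2\,h^{j\bar k}\partial_j\partial_{\bar k}f + \bigl[\partial_j h^{j\bar k} + h^{j\bar k}\,\partial_j\log\det(h_{a\bar b})\bigr]\partial_{\bar k}f + \text{c.c.},
$$
and the second-order term is precisely $2\trace_{\omega_h}(i\partial\dbar f)$, which is the coordinate expression of the Chern Laplacian.

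It remains to simplify the bracketed first-order coefficient. Using the two standard identities $\partial_j\log\det(h_{a\bar b})=h^{\bar b a}\partial_j h_{a\bar b}$ (Jacobi's formula) and $\partial_j h^{j\bar k}=-h^{j\bar b}h^{a\bar k}\partial_j h_{a\bar b}$ (obtained by differentiating $h^{j\bar b}h_{a\bar b}=\delta^j_a$), and relabeling dummy indices, the bracket becomes
$$
h^{a\bar k}\,h^{j\bar b}\bigl(\partial_a h_{j\bar b} - \partial_j h_{a\bar b}\bigr)=h^{a\bar k}\,T^j_{aj} = h^{a\bar k}\theta_a,
$$
where the first equality uses the formula $T^\ell_{jk}=h^{\ell\bar m}(\partial_j h_{k\bar m}-\partial_k h_{j\bar m})$ for the Chern torsion and the second the definition \eqref{torsion-1-form} of the torsion $1$-form. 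Adding the complex conjugate contribution, the first-order part of $\Delta_g f - 2\trace_{\omega_h}(i\partial\dbar f)$ equals
$$
h^{a\bar k}\theta_a\,\partial_{\bar k}f + h^{j\bar a}\theta_{\bar a}\,\partial_j f,
$$
which one recognizes as $\langle df,\theta\rangle_h$ via the sesquilinear extension of the Hermitian pairing stated after \eqref{2-forms} from $(0,1)$- to arbitrary complex $1$-forms, using that $\theta$ is real so that $\overline{\theta_j}=\theta_{\bar j}$.

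The entire argument is a routine local calculation; the only mildly delicate point is the bookkeeping of conjugations and the verification that the Hermitian pairing $\langle df,\theta\rangle_h$ is normalized exactly so that the two contributions combine as claimed. No deeper input is required. A coordinate-free proof could alternatively proceed by comparing the Levi--Civita and Chern connections and expressing their difference via the Chern torsion (as in Gauduchon's formula~(25) cited in the text), but the local computation above is self-contained and more direct.
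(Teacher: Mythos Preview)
Your proof is correct. Both your argument and the paper's are direct local computations in holomorphic coordinates, but they proceed along different lines: the paper writes $\Delta_g f = 2h^{j\bar k}(\widetilde\nabla df)_{j\bar k}$, computes the mixed Levi--Civita Christoffel symbols $\widetilde\Gamma^k_{\bar i j}=\tfrac12 h^{k\bar\ell}(\partial_{\bar i}h_{j\bar\ell}-\partial_{\bar\ell}h_{j\bar i})$ explicitly, and recognizes their trace as (half of) the torsion $1$-form; you instead start from the divergence formula $\Delta_g f=\frac{1}{\sqrt{\det g}}\partial_A(\sqrt{\det g}\,g^{AB}\partial_B f)$ and simplify the first-order coefficient via Jacobi's formula and the derivative of the inverse metric. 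Your route avoids the Levi--Civita Christoffel symbols altogether and is arguably more self-contained, at the cost of a small amount of bookkeeping with $\det g$ versus $\det(h_{a\bar b})$; the paper's route makes the comparison between $\Delta_g$ and the Chern Laplacian more transparently a statement about the difference of two connections (which is what the reference to Gauduchon's formula~(25) suggests). Either way the computation is routine.
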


\begin{proof} Let $\widetilde{\nabla}$ denote the Levi--Civita connection of $g\coloneqq2\Re h$. Since $h$ is Hermitian, the Christoffel symbols $\widetilde{\Gamma}^k_{\bar{i}j}$ in local holomorphic coordinates reduce to
	\begin{equation}\label{e:lc}
	\widetilde{\Gamma}^k_{\bar{i}j}
	=
	\frac{1}{2}h^{k\bar{\ell}} \left(\partial_{\overline{i}} h_{j\bar{\ell}} - \partial_{\overline{\ell}} h_{j\bar{i}}\right).
	\end{equation}
It then follows that
\begin{equation}
	\widetilde{\Gamma}^k_{\bar{i} k}
	=
	\frac{1}{2} h^{k\bar{\ell}} \left(\partial_{\overline{i}} h_{k\bar{\ell}} - \partial_{\overline{\ell}} h_{k\bar{i}}\right)
	=
	\frac{1}{2} T_{\bar i \bar{\ell}}^{\bar \ell},
\end{equation}
where $T_{\bar i \bar{\ell}}^{\bar \ell}$ is the torsion (0,1)-form of the Chern connection. Thus
\begin{equation}
	h^{j\bar{k}}  \widetilde{ \Gamma}^{\bar{\ell}}_{j\bar{k}} = - \widetilde{\Gamma}^{\bar{k}}_{i\bar{k}} h^{i\bar{\ell}} = - \frac{1}{2} h^{i\bar{\ell}} T^k_{ik}.
\end{equation}
Locally, $\trace_{\omega_h}(i\partial\bar{\partial} f) = h^{j\bar{k}} \partial_j \partial_{\bar{k}} f$ and therefore
\begin{align}
	\Delta_g f
	&= 
	2 h^{j\bar{k}}\bigl(\widetilde{\nabla} df\bigr)_{j\bar{k}} \notag \\
	&= 
	2 h^{j\bar{k}} \left[\partial_j \partial_{\bar{k}} f -   \widetilde{\Gamma}^{\bar{l}}_{j\bar{k}} \partial_{\bar{l}}f - \widetilde{\Gamma}^{l}_{j\bar{k}}
	 \partial_{l} f \right] \notag \\
	&= 
	2\trace_{\omega_h}(\Cplxi\partial\dbar f)+
	\langle d f,\theta\rangle_h. \qedhere
\end{align}
\end{proof}
As a consequence of \cref{thm:litam,torsion-form-prp}, we have the following mean
value inequality. Recall that $\psi$ was defined to satisfy $\mu = e^{-2\psi}\,\text{Vol}$. 

\begin{lem}\label{mean-value-lem} Assume that the Ricci curvature is bounded from below by $K\leq0$ on $B(p,2R)$ and put
\be
	\lambda\coloneqq\sup_{B(p,2R)}\left\{\trace_{\omega_h}(\Cplxi\partial\dbar \psi) + \tfrac{1}{8}|\theta|_h^2\right\},
\ee
where $\theta$ is the torsion 1-form. If $F\colon B(p,2R)\rightarrow\CC$ is holomorphic, then
\be\label{e:meanvalue}
	|F(p)|^2 e^{-2\psi(p)}\leq \frac{C}{\vol(p,R)}\int_{B(p,R)}|F|^2\,d\mu,
\ee 
where the constant $C$ depends only on $\lambda R^2$ and $R\sqrt{-K}$.
\end{lem}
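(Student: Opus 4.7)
The plan is to apply the mean value inequality of \cref{thm:litam} to the smooth, nonnegative function $u \coloneqq |F|^2 e^{-2\psi}$ on $B(p,2R)$. Once the inequality $\sup_{B(p,(1-\delta)R)} u \leq (C/\vol(p,R))\int_{B(p,R)} u\,\dvol$ is in hand, it directly yields \cref{e:meanvalue} since $p \in B(p,(1-\delta)R)$ and $u\,\dvol = |F|^2\,d\mu$. The key point is thus to verify the differential inequality $\Delta_g u \geq -c\lambda u$ required by \cref{thm:litam} for some absolute $c>0$.

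To prove the differential inequality, I would start from the elementary identity
\[
  i\partial\dbar u = u \cdot i\partial\dbar \log u + \frac{i\partial u \wedge \dbar u}{u},
\]
valid wherever $u>0$, i.e.\ where $F\neq 0$. Since $\log u = \log|F|^2 - 2\psi$ and $\log|F|^2$ is pluriharmonic off the zero set of $F$, we have $i\partial\dbar \log u = -2\, i\partial\dbar \psi$ there. Taking traces against $\omega_h$ and using $\trace_{\omega_h}(i\partial u \wedge \dbar u) = |\partial u|_h^2$ yields
\[
  \trace_{\omega_h}(\Cplxi\partial\dbar u) = -2u\, \trace_{\omega_h}(\Cplxi\partial\dbar \psi) + u^{-1}|\partial u|_h^2 \quad \text{on } \{F\neq 0\}.
\]
Substituting this into the formula of \cref{torsion-form-prp} and using $|\partial u|_h^2 = \tfrac12|du|_h^2$ produces an expression for $\Delta_g u$ involving the torsion cross term $\langle du, \theta\rangle_h$, which I would control by the AM--GM inequality $|\langle du, \theta\rangle_h| \leq u^{-1}|du|_h^2 + \tfrac{u}{4}|\theta|_h^2$. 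This absorbs the unwanted $u^{-1}|du|_h^2$ contribution and gives the clean pointwise lower bound
\[
  \Delta_g u \geq -4u\bigl[\trace_{\omega_h}(\Cplxi\partial\dbar \psi) + \tfrac{1}{16}|\theta|_h^2\bigr] \geq -4\lambda\, u,
\]
where the last step uses $\tfrac{1}{16}\leq \tfrac18$ together with the definition of $\lambda$. Since $u$ is smooth everywhere on $B(p,2R)$ and vanishes on the zero set of $F$ as a local minimum (so that $\Delta_g u \geq 0 \geq -4\lambda u$ there), the inequality extends by continuity to the full ball.

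Equipped with this inequality, \cref{thm:litam} applied with $q=1$, $\delta = 1/2$, and $\lambda$ replaced by $4\lambda$ yields
\[
  u(p) \leq \sup_{B(p,R/2)} u \leq \frac{C}{\vol(p,R)} \int_{B(p,R)} u\,\dvol = \frac{C}{\vol(p,R)} \int_{B(p,R)} |F|^2\,d\mu,
\]
with $C$ depending only on $\lambda R^2$ and $R\sqrt{-K}$, which is exactly \cref{e:meanvalue}. The step I expect to require the most care is the simultaneous tracking of sign conventions for $\Delta_g$ between \cref{torsion-form-prp} and \cref{thm:litam}, together with the AM--GM absorption bringing the coefficient of $|\theta|^2_h$ down to something majorized by $\tfrac18$; the rest of the argument is essentially routine once $\Delta_g u + 4\lambda u \geq 0$ has been established.
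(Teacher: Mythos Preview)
Your proposal is correct and follows essentially the same route as the paper: set $u=|F|^2e^{-2\psi}$, compute $\trace_{\omega_h}(i\partial\dbar u)$, pass to $\Delta_g u$ via \cref{torsion-form-prp}, absorb the torsion cross term by AM--GM to obtain $\Delta_g u\geq -4\lambda u$, and then invoke \cref{thm:litam}. The only cosmetic differences are that the paper expands $\trace_{\omega_h}(i\partial\dbar u)$ directly via the product rule (writing the positive term as $|\partial F-2F\partial\psi|_h^2e^{-2\psi}$ rather than $u^{-1}|\partial u|_h^2$, which is the same thing where $F\neq 0$) and uses a slightly different AM--GM weighting that lands exactly on the $\tfrac18|\theta|_h^2$ coefficient, whereas your splitting gives the sharper $\tfrac{1}{16}|\theta|_h^2$ before majorizing by $\lambda$.
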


\begin{proof}
Let $f\coloneqq|F|^2e^{-2\psi}$. First, observe that by the Cauchy--Schwarz inequality 
\begin{align}
	\left|\langle df,\theta\rangle_h\right | 
	& =
	2|\langle(\dbar \ol F- 2\ol F\dbar\psi),\ol F\theta\rangle_h|e^{-2\psi} \notag \\
	& \leq
	2 |\partial F - 2F\partial\psi|_h^2e^{-2\psi} +  \frac{1}{2}f|\theta|_h^2.
\end{align}
Next, we compute 
\begin{align}
\trace_{\omega_h}(\Cplxi\partial\dbar f) 
& =
h^{j\overline{k}}\partial_j\partial_{\overline{k}}f \notag \\
& = h^{j\overline{k}}(\partial_jF - 2\partial_j\psi F)\overline{(\partial_kF - 2\partial_k\psi F)}e^{-2\psi}
-
2 h^{j\overline{k}}(\partial_j\partial_{\overline{k}}\psi) e^{-2\psi} \notag \\
& =\left(|\partial F-2F\partial\psi|_h^2 - 2|F|^2 \trace_{\omega_h}(\Cplxi\partial\dbar \psi)\right)e^{-2\psi} \notag \\
& = 
|\partial F-2F\partial\psi|_h^2 e^{-2\psi} - 2\trace_{\omega_h}(\Cplxi\partial\dbar \psi) f.
\end{align}
Putting the two estimates together and exploiting \cref{torsion-form-prp}, we obtain, on $B(p,2R)$,
\begin{align}
	\Delta_gf
	& \geq 
	2 \trace_{\omega_h}(\Cplxi\partial\dbar f) 
	-
	\left|\langle df,\theta\rangle_h\right | \notag  \\
	& \geq
	-4\left(\trace_{\omega_h}(\Cplxi\partial\dbar \psi) + \frac{1}{8} |\theta|_h^2\right)f \notag \\
	& \geq
	-4\lambda f.
\end{align}
This estimate, together with the lower bound on the Ricci curvature, shows that the hypothesis of \cref{thm:litam} are satisfied. Thus, 
\be
	f(p)\leq \frac{C}{\vol (p,R)}\int_{B(p,R)}f\,\dvol,
\ee 
where $C$ depends on $\lambda R^2$ and $R\sqrt{-K}$, as we wanted.
\end{proof}

Combining \cref{mean-value-lem,coercexp-thm}, we immediately obtain:
\begin{thm}\label{can-point-thm}
Let $(M,h)$ be a complete Hermitian manifold with Ricci curvature bounded from below by $-K$ ($K\leq 0$). Let $\mu = e^{-2\psi} \vol$ be a smooth positive measure such that $\boxop$ is $b^2$-coercive. Suppose further that
\begin{equation}
	\trace_{\omega_h}(i \partial\bar{\partial} \psi) + \tfrac{1}{8}|\theta|^2 \leq B < +\infty.
\end{equation}
Let $u\in L^2_{0,1}(M,h,\mu)$ be supported on $B(p,R)$ and $\dbar$-closed, and put $f\coloneqq\dbarstar\boxop^{-1}u$. For every $q \in M$ and $\gamma<2\sqrt{2}b$, the following bound holds:
\be
	|f(q)|^2 e^{-2\psi(q)}\leq \frac{C}{\vol(q,R)}e^{-\gamma d(p,q)}\int_{B(p,R)}|u|^2_h\,d\mu, 
\ee 
where $C$ depends on $\gamma, b, BR^2$, and $R\sqrt{-K}$.
\end{thm}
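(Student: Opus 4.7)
The plan is to combine the $L^2$ exponential decay from \cref{coercexp-thm} with the mean value inequality from \cref{mean-value-lem}. The bridge linking them is the observation that, under the coercivity hypothesis $\boxop \geq b^2$ and the assumption $\dbar u = 0$, the canonical solution $f = \dbarstar\boxop^{-1}u$ satisfies $\dbar f = u$; since $u$ is supported in $B(p,R)$, the function $f$ is holomorphic on $M \setminus B(p,R)$.

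For $q \in M$ with $d(p,q) \geq 3R$, the ball $B(q,2R)$ is disjoint from $\supp u$, hence $f|_{B(q,2R)}$ is holomorphic. Under the hypothesis $\trace_{\omega_h}(i\partial\dbar\psi) + \tfrac{1}{8}|\theta|_h^2 \leq B$, the quantity $\lambda$ appearing in \cref{mean-value-lem} satisfies $\lambda \leq B$ uniformly on $B(q,2R)$. Applying \cref{mean-value-lem} at $q$ with radius $R$ then yields
\be
|f(q)|^2 e^{-2\psi(q)} \leq \frac{C_1}{\vol(q,R)} \int_{B(q,R)} |f|^2 \, d\mu,
\ee
with $C_1$ depending only on $BR^2$ and $R\sqrt{-K}$. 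Substituting the $L^2$ bound $\int_{B(q,R)} |f|^2 \, d\mu \leq C_{\gamma,R,b} \, e^{-\gamma d(p,q)} \int_{B(p,R)} |u|_h^2 \, d\mu$ from \cref{coercexp-thm} produces the claimed pointwise estimate with $C = C_1 \cdot C_{\gamma,R,b}$, which has the prescribed dependence on $\gamma$, $b$, $BR^2$, and $R\sqrt{-K}$.

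The main obstacle is the complementary range $d(p,q) < 3R$, where $f$ need not be holomorphic on any fixed-radius neighborhood of $q$, and so \cref{mean-value-lem} does not directly apply. In this regime, however, $e^{-\gamma d(p,q)} \geq e^{-3\gamma R}$, so it suffices to produce a uniform bound of the form $|f(q)|^2 e^{-2\psi(q)}\vol(q,R) \leq C' \|u\|_{L^2(B(p,R))}^2$. For $q$ with $R < d(p,q) < 3R$ this follows by running the same mean-value/$L^2$-decay combination at a reduced radius $r < (d(p,q) - R)/2$ where $f$ is still holomorphic on $B(q,2r)$, and passing from $\vol(q,r)^{-1}$ to $\vol(q,R)^{-1}$ via the Bishop--Gromov volume comparison that the Ricci lower bound makes available. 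For the residual region $q \in B(p,R)$ one absorbs the case into the constant using the global energy bound $\|f\|_{L^2(\mu)} \leq b^{-2}\|u\|_{L^2(\mu)}$ together with standard elliptic regularity of the smooth canonical solution. The essential exponential off-diagonal decay emerges transparently from the large-$d(p,q)$ combination; the near-diagonal adjustment is the sole auxiliary technicality.
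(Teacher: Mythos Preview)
Your core argument—combining the mean-value inequality of \cref{mean-value-lem} with the $L^2$ decay of \cref{coercexp-thm}, via the holomorphicity of $f$ on $M\setminus B(p,R)$—is precisely the paper's approach; the paper offers no details beyond ``Combining \cref{mean-value-lem,coercexp-thm}, we immediately obtain''. For $d(p,q)\geq 3R$ your proof is correct and matches the paper.

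You are right to flag the near-diagonal regime, which the paper simply ignores. However, your patches do not deliver a uniform constant. In the range $R<d(p,q)<3R$, applying \cref{mean-value-lem} at radius $r=(d(p,q)-R)/2$ and then converting $\vol(q,r)^{-1}$ to $\vol(q,R)^{-1}$ via Bishop--Gromov costs a factor comparable to $(R/r)^{2n}$, which blows up as $d(p,q)\to R^{+}$. For $q\in B(p,R)$ the situation is worse: for a merely $L^2$ datum $u$, the canonical solution $f=\dbarstar\boxop^{-1}u$ need not even be continuous on $\supp u$, so no pointwise bound of the stated form can hold there; invoking ``standard elliptic regularity'' does not help, since elliptic estimates would control $f$ in terms of higher norms of $u$, not $\|u\|_{L^2}$ alone. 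In the paper's sole application (\cref{sec:conclusion}) the theorem is invoked with $d(p,q)\geq 4$ and $R=2$, i.e., squarely in the far regime where your argument is complete; the phrase ``for every $q\in M$'' in the statement should be read with that caveat.
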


This completes the proof of \cref{thm-2}.

\section{The basic identity for $\boxop$}\label{sec:basic}
We devote this section to a discussion of the basic identity for $\boxop$, which is essentially \cite{griffiths} applied to $(0,1)$-forms with compact support. We provide a simple proof of this case. 

We denote by $\nabla$ the Chern connection of $h$. In local holomorphic coordinates, the only nonvanishing  Christoffel symbols of $\nabla$ are $\Gamma_{jk}^{\ell}$ and $\Gamma_{\ol j\ol k}^{\ol\ell} = \overline{\Gamma_{jk}^{\ell}}$, where 
\be
\Gamma_{jk}^{\ell}=h^{\ol m \ell} \partial_j h_{k \ol m}, 
\ee
We shall only need the $(0,1)$-part of $\nabla$, which we denote by $\ol \nabla$. In particular, if $u=u_{\ol k}\,dz^{\ol k}$, $\ol \nabla u$ is the 2-tensor 
\bel\label{chern-conn}
\ol \nabla u = \left (\partial_{\ol j} u_{\ol k} - \Gamma^{\ol \ell}_{\ol j\ol k}u_{\ol \ell}\right)dz^{\ol j}\otimes dz^{\ol k}.
\eel
The key to our proof of the basic inequality is an elementary pointwise identity that involves only the metric $h$. In order to state it, we recall the standard notation $u^\sharp$ for the vector field associated to the $1$-form $u$ by the metric $h$. Notice that if $u$ is a $(0,1)$-form, then $u^\sharp$ is a $(1,0)$-vector field, and  $\ol\nabla u^\sharp$ is a $2$-tensor with one covariant and one contravariant index.

\begin{lem}\label{lem:torsion} For every $(0,1)$-form $u = u_{\bar{i}} dz^{\bar{i}}$, the following identity holds:
\be
 \trace\big(\ol \nabla u^\sharp\otimes \nabla \ol u^\sharp\big)=2|\ol \nabla u|_h^2 - |\dbar u-Tu|_h^2,
\ee
where $\trace\big(\ol \nabla u^\sharp\otimes \nabla\ol u^\sharp\big) = (\ol\nabla u^\sharp)_{\ol j}^{m}(\nabla \ol u^\sharp)^{\ol j}_{m}$ and $Tu = T^{\bar{i}}_{\bar{j}\bar{k}} u_{\bar{i}} dz^{\bar{j}} \otimes dz^{\bar{k}}$.

\end{lem}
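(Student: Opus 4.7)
The plan is to verify the identity as an algebraic one, pointwise in a local holomorphic coordinate chart. Set $A_{\bar j\bar k}:=(\ol\nabla u)_{\bar j\bar k}=\partial_{\bar j}u_{\bar k}-\Gamma^{\bar\ell}_{\bar j\bar k}u_{\bar\ell}$; the work will consist of rewriting both sides in terms of $A$ and the metric. The key preliminary fact is that raising an index commutes with $\ol\nabla$: differentiating $h^{\bar k m}h_{p\bar k}=\delta_p^m$ by $\partial_{\bar j}$ and applying the formula $\Gamma^{\bar\ell}_{\bar j\bar k}=h^{n\bar\ell}\partial_{\bar j}h_{n\bar k}$ recalled just above the lemma, one obtains
\[
(\ol\nabla u^\sharp)^m_{\bar j}=h^{\bar k m}A_{\bar j\bar k},
\qquad
(\nabla\ol u^\sharp)^{\bar j}_m=h^{\bar j k}\overline{A_{\bar m\bar k}}.
\]
The left-hand side of the lemma therefore reduces to
\[
\trace\bigl(\ol\nabla u^\sharp\otimes\nabla\ol u^\sharp\bigr)
=
h^{\bar k m}h^{\bar j n}A_{\bar j\bar k}\,\overline{A_{\bar m\bar n}}.
\]

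Next, the defining identity $T^{\bar\ell}_{\bar j\bar k}=\Gamma^{\bar\ell}_{\bar j\bar k}-\Gamma^{\bar\ell}_{\bar k\bar j}$ for the Chern torsion shows that $\dbar u-Tu$ is exactly the $(\bar j,\bar k)$-antisymmetrization of $A$:
\[
(\dbar u-Tu)_{\bar j\bar k}=A_{\bar j\bar k}-A_{\bar k\bar j}.
\]
I would then split $A=A^s+A^a$ into its $(\bar j,\bar k)$-symmetric and antisymmetric parts. Since symmetric and antisymmetric 2-tensors are orthogonal under the inner product convention of \cref{2-forms}, we obtain
\[
|\ol\nabla u|^2_h=|A^s|^2_h+|A^a|^2_h,
\qquad
|\dbar u-Tu|^2_h=4|A^a|^2_h,
\]
so the right-hand side of the desired identity simplifies to $2(|A^s|^2_h-|A^a|^2_h)$.

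It remains to match the left-hand side. Substituting $A=A^s+A^a$ into $h^{\bar k m}h^{\bar j n}A_{\bar j\bar k}\overline{A_{\bar m\bar n}}$, the two sym/antisym cross terms vanish by the same orthogonality, while a short computation—carried out most transparently in a local unitary frame, where $h^{\bar k m}=\delta^{km}$ and both contractions reduce to $\sum_{j,k}A^{s/a}_{\bar j\bar k}\overline{A^{s/a}_{\bar k\bar j}}$—gives
\[
h^{\bar k m}h^{\bar j n}A^s_{\bar j\bar k}\overline{A^s_{\bar m\bar n}}=2|A^s|^2_h,
\qquad
h^{\bar k m}h^{\bar j n}A^a_{\bar j\bar k}\overline{A^a_{\bar m\bar n}}=-2|A^a|^2_h.
\]
Adding these yields exactly $2(|A^s|^2_h-|A^a|^2_h)$, completing the proof. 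The one slightly delicate point is the sign in the antisymmetric contraction: unlike $h^{\bar j m}h^{\bar k n}$, the tensor $h^{\bar k m}h^{\bar j n}$ is not $(\bar j\leftrightarrow\bar k)$-symmetric, and reducing it to $\pm|A^{s/a}|^2_h$ requires a simultaneous $(m,n)$-swap which, combined with the antisymmetry of $A^a$, produces the extra minus sign. Apart from this bookkeeping, everything is routine.
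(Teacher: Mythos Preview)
Your proof is correct and follows essentially the same route as the paper: both reduce the identity to a pointwise algebraic statement about the tensor $A_{\bar j\bar k}=(\ol\nabla u)_{\bar j\bar k}$, identify $\dbar u-Tu$ with the antisymmetrization $A-\widetilde A$, and compare the ``swapped'' contraction $h^{\bar k m}h^{\bar j n}A_{\bar j\bar k}\overline{A_{\bar m\bar n}}$ with $|A|_h^2$. The only cosmetic difference is that the paper writes the algebraic step directly as $2|A|_h^2-|A-\widetilde A|_h^2=A_{\bar j\bar k}A_{m\ell}h^{\bar j\ell}h^{\bar k m}$ without splitting into symmetric and antisymmetric parts, whereas you do; the two computations are equivalent.
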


\begin{proof}
Notice that in local coordinates $u^\sharp=u^m\,\partial_m$ where $u^m\coloneqq h^{m\ol k}u_{\ol k}$, and recall that one of the defining properties of the Chern connection is that\be
 \ol \nabla_{\partial_{\ol j}}(u^\sharp) = \partial_{\ol j}u^{m}\,\partial_m.
 \ee It is thus clear that the trace in the statement is \bel\label{trace}
 \partial_{\ol j}u^{m}\, \partial_mu^{\ol j} = h^{m\ol k}\big(\partial_{\ol j}u_{\ol k}-\Gamma^{\ol p}_{\ol j\ol k}u_{\ol p}\big)\, h^{\ol j\ell}\big(\partial_{m}u_\ell-\Gamma^{q}_{m\ell}u_q\big).
 \eel
Now notice that if $A=A_{\ol j \ol k}\,dz^{\ol j}\otimes dz^{\ol k}$ and $\widetilde{A}=A_{\ol k\ol j }\,dz^{\ol j}\otimes dz^{\ol k}$, a straightforward computation gives \be
2|A|_h^2-|A-\widetilde{A}|_h^2 = A_{\ol j\ol k}A_{m\ell}h^{\ol j\ell}h^{\ol k m}.
\ee
If $A=\ol\nabla u$, by \cref{chern-conn} we have $A-\widetilde{A}=\dbar u-Tu$, and the identity above becomes\be
2|\ol \nabla u|_h^2 - |\dbar u-Tu|_h^2= \big(\partial_{\ol j}u_{\ol k}-\Gamma^{\ol p}_{\ol j\ol k}u_{\ol p}\big)\big(\partial_{m}u_\ell-\Gamma^{q}_{m\ell}u_q\big) h^{\ol j\ell}h^{\ol km}.\ee
In view of \cref{trace}, this is the formula we set out to prove.
\end{proof}
\begin{prop}\label{prop:basic}
 \label{prp:basic}
 Let $(M,h)$ be a Hermitian manifold and $\mu$ a positive smooth measure with curvature form $F^{\mu}$.
 Then for every $u \in \mathcal{D}_{(0,1)}(M)$,
 \begin{equation}
  \label{eq:basic_identity}
\int_M\left|\dbar u - Tu\right|_h^2\,d\mu + \int_M|\dbarstar u|^2\,d\mu = 2\int_M |\ol \nabla u|_h^2\,d\mu + 2\int_M \langle F^\mu,\overline{u}\wedge u\rangle_h\,d\mu
 \end{equation}
and, for any $\nu >0$,
 \begin{equation}\label{basic-inequality}
 \energ(u) \geq 
 \frac{2}{1+\nu}\int_M |\ol \nabla u|_h^2\,d\mu 
 +
 \frac{2}{1+\nu}\int_M \langle F^\mu,\overline{u}\wedge u\rangle_h\,d\mu 
 - 
 \frac{1}{\nu}\int_M|Tu|_h^2\,d\mu
 \end{equation}
where $\energ(u)$ is defined by \cref{e:energy} and $Tu$ is defined in \cref{lem:torsion}.
\end{prop}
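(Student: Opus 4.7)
The plan is to derive \cref{eq:basic_identity} by integrating the pointwise identity of \cref{lem:torsion} against $d\mu$ and then, separately, converting $\int_M \trace(\ol\nabla u^\sharp\otimes\nabla\overline{u^\sharp})\,d\mu$ into $\int_M|\dbarstar u|^2\,d\mu$ up to a curvature correction via integration by parts. Concretely, integrating \cref{lem:torsion} reduces \cref{eq:basic_identity} to establishing
\begin{equation*}
\int_M \trace(\ol\nabla u^\sharp\otimes\nabla\overline{u^\sharp})\,d\mu = \int_M|\dbarstar u|^2\,d\mu - 2\int_M\langle F^\mu,\overline{u}\wedge u\rangle_h\,d\mu.
\end{equation*}

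To verify this identity I would work in local holomorphic coordinates in which $d\mu = e^{-2\varphi}\,d\lambda$ with $d\lambda$ a constant-coefficient volume form, so that globally $F^\mu = i\partial\dbar\varphi$. A one-line integration by parts against a test function yields the local formula $\dbarstar u = -\partial_j u^j + 2u^j\partial_j\varphi$, where $u^j = h^{\bar k j}u_{\bar k}$. Since the Chern connection on $T^{1,0}M$ has no mixed Christoffel symbols, $(\ol\nabla u^\sharp)^m_{\bar j} = \partial_{\bar j}u^m$, so the integrand on the left-hand side is $\sum_{j,m}(\partial_{\bar j}u^m)(\partial_m\overline{u^j})$. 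I would then integrate by parts in $\partial_m$, commute $\partial_m\partial_{\bar j} = \partial_{\bar j}\partial_m$ on the scalar $u^m$, and integrate by parts once more in $\partial_{\bar j}$. The pure first-derivative contributions complete the square to $|\partial_j u^j - 2u^j\partial_j\varphi|^2 = |\dbarstar u|^2$, while the one remaining term, in which both integrations by parts fall on the weight $e^{-2\varphi}$, is proportional to $u^m\overline{u^j}\partial_m\partial_{\bar j}\varphi$ and matches $-2\langle F^\mu,\overline{u}\wedge u\rangle_h$ under the pairing convention \cref{2-forms}. Compact support of $u$ kills boundary terms, and a partition of unity globalizes the local computation.

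The inequality \cref{basic-inequality} then follows from \cref{eq:basic_identity} by the elementary Young bound $|\dbar u - Tu|_h^2 \leq (1+\nu)|\dbar u|_h^2 + (1+\nu^{-1})|Tu|_h^2$: integrating, substituting into \cref{eq:basic_identity}, and dividing by $1+\nu$ yields \cref{basic-inequality} upon noting that $(1+\nu^{-1})/(1+\nu) = 1/\nu$.

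The main obstacle I anticipate is the bookkeeping required for the integration-by-parts step: the two integrations by parts generate several cross terms involving one derivative on $u$ and one on $\varphi$, and recognizing that these combine to complete the square $|\partial_j u^j - 2u^j\partial_j\varphi|^2$ depends essentially on the explicit local form of $\dbarstar u$. An abstract alternative through the commutator $[\nabla_{\partial_m},\nabla_{\partial_{\bar j}}]$ acting on sections of $T^{1,0}M$ would make the appearance of the Chern curvature inside $F^\mu$ more transparent, but the direct route via \cref{lem:torsion} is the most elementary.
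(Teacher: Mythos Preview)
Your proposal is correct and follows essentially the same route as the paper: both reduce to a coordinate chart, integrate the pointwise identity of \cref{lem:torsion}, and use the commutation $[\partial_m,\partial_{\bar j}]=0$ together with the local formula $\dbarstar u=-\partial_j u^j+2(\partial_j\varphi)u^j$ to convert $\int\trace(\ol\nabla u^\sharp\otimes\nabla\overline{u^\sharp})\,d\mu$ into $\int|\dbarstar u|^2\,d\mu-2\int\langle F^\mu,\overline u\wedge u\rangle_h\,d\mu$. Your Young-inequality derivation of \cref{basic-inequality} (bounding $\int|\dbarstar u|^2\leq(1+\nu)\int|\dbarstar u|^2$ before dividing) is exactly what the paper's ``follows immediately'' hides.
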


It may be of interest to remark that the right hand side of \eqref{eq:basic_identity} has the following explicit epression in terms of the $(1,0)$-vector field $u^{\sharp}=u^\ell\partial_\ell$ (and the metric $h$):\begin{equation*}
2\int_M h_{\ell\overline m}h^{\overline j k} \partial_{\overline j}u^\ell  \partial_ku^{\overline m}d\mu + 2\int_M \partial_\ell\partial_{\overline m}\varphi u^\ell u^{\overline m}d\mu.
\end{equation*}
If $\dim M=1$, the expression above does not contain explicitly the metric, making the analysis of $\Box_{h,\mu}$ much simpler.

\begin{proof}
It is enough to prove the identity for $u$ supported on a coordinate chart with coordinates $z^j$. Let $\varphi$ be the real-valued function such that $d\mu=\Cplxi e^{-2\varphi}\, dz^1\wedge dz^{\ol 1}\wedge\dotsb\wedge dz^n\wedge dz^{\ol n}$. Then the adjoint of $\partial_m$ with respect to $d\mu$ is $\delta_m\coloneqq-\partial_m +2\partial_m\varphi$. Integrating both sides of the identity of \cref{lem:torsion}, the usual commutation argument yields\be
2\int_M|\ol \nabla u|_h^2\,d\mu-\int_M|\dbar u-Tu|_h^2\,d\mu= \int_M|\delta_mu^m|^2\,d\mu - 2\int_M\partial_m\partial_{\ol j}\varphi\, u^m u^{\ol j}\,d\mu.
 \ee
To complete the proof of \cref{eq:basic_identity}, one may easily check that $\partial_m\partial_{\ol j}\varphi\, u^m u^{\ol j}=\langle F^\mu,\overline{u}\wedge u\rangle_h$ and that $\delta_mu^m = \dbarstar u$. The basic inequality \cref{basic-inequality} follows immediately.
\end{proof}
\begin{cor}\label{cor:pointwisecondition}
	Let $(M,h)$ be a complete Hermitian manifold and $\mu$ a smooth positive measure on $M$. Suppose that
	\begin{equation}
		F^{\mu} \geq \sigma b^2 \omega_h + i \left(\frac{\sigma}{2\sigma -1}\right) T\circ \ol{T}, \quad b>0 \text{ and } \sigma > \frac12.
	\end{equation}
	Then the associated complex Laplacian $\boxop$ is $b^2$-coercive. If $T=0$, then the conclusion still holds under the assumption $F^\mu \geq \frac12 b^2\omega_h$.
\end{cor}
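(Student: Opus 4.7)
The plan is to deduce the coercivity of $\boxop$ from the basic inequality \cref{basic-inequality} of \cref{prop:basic} by substituting the pointwise hypothesis on $F^\mu$ and making a judicious choice of the free parameter $\nu$. By \cref{prp:core} it suffices to verify $\energ(u) \geq b^2 \int_M |u|_h^2\,d\mu$ for every $u \in \mathcal{D}_{(0,1)}$. The two pointwise identities needed are
\begin{equation*}
\langle \omega_h, \ol{u} \wedge u\rangle_h = |u|_h^2, \qquad \langle iT\circ\ol T, \ol{u}\wedge u\rangle_h = |Tu|_h^2;
\end{equation*}
both follow from the same local-coordinate computation that closes the proof of \cref{prop:basic}, which shows that for any real $(1,1)$-form $iA_{j\ol k}\,dz^j\wedge dz^{\ol k}$ with $A_{j\ol k}$ Hermitian one has $\langle iA_{j\ol k}\,dz^j\wedge dz^{\ol k},\ol{u}\wedge u\rangle_h = A_{p\ol q}\,u^p u^{\ol q}$. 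Specialization to $A_{j\ol k} = h_{j\ol k}$ yields the first identity, and specialization to the Hermitian matrix read off from \cref{torsion-squared} yields the second.

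Next, in \cref{basic-inequality} I would discard the nonnegative $|\ol\nabla u|_h^2$ term, bound $\langle F^\mu,\ol{u}\wedge u\rangle_h$ from below by $\sigma b^2 |u|_h^2 + \tfrac{\sigma}{2\sigma-1}|Tu|_h^2$ using the hypothesis and the identities above, and set $\nu = 2\sigma - 1 > 0$, so that $1+\nu = 2\sigma$. This produces
\begin{equation*}
\energ(u) \geq \frac{1}{\sigma}\int_M \left(\sigma b^2 |u|_h^2 + \frac{\sigma}{2\sigma-1}|Tu|_h^2\right)d\mu - \frac{1}{2\sigma-1}\int_M |Tu|_h^2\,d\mu = b^2\int_M |u|_h^2\,d\mu,
\end{equation*}
the $|Tu|_h^2$ contributions cancelling exactly. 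This tight cancellation is precisely why the fraction $\sigma/(2\sigma-1)$ appears in the hypothesis: once $\nu = 2\sigma-1$ is forced by the requirement that the coefficient of $|u|_h^2$ equal $b^2$, this is the unique multiple of $iT\circ\ol T$ that makes the torsion-induced error drop out.

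For the K\"ahler case $T=0$, I would use the sharper basic identity \cref{eq:basic_identity} itself rather than the inequality: both the $|\dbar u - Tu|^2$ and $|Tu|^2$ terms reduce trivially, leaving $\energ(u) = 2\int_M |\ol\nabla u|_h^2\,d\mu + 2\int_M \langle F^\mu,\ol{u}\wedge u\rangle_h\,d\mu$, from which the weaker hypothesis $F^\mu \geq \tfrac{1}{2}b^2\omega_h$ immediately yields $\energ(u) \geq b^2\int_M |u|_h^2\,d\mu$. The only real obstacle in the argument is the bookkeeping behind the two pointwise identities; all of the analytic content is already encoded in \cref{prop:basic}.
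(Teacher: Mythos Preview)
Your proposal is correct and follows essentially the same route as the paper: the paper's proof consists of the single observation $\langle iT\circ\ol T,\ol u\wedge u\rangle_h = |Tu|_h^2$ followed by the choice $\nu = 2\sigma-1$ in \cref{basic-inequality}, and you have simply unpacked that computation in full (including the companion identity $\langle \omega_h,\ol u\wedge u\rangle_h = |u|_h^2$ and the reduction to $\mathcal D_{(0,1)}$ via \cref{prp:core}). Your separate treatment of the K\"ahler case via the identity \cref{eq:basic_identity} rather than the inequality is a harmless variant; one could equally well let $\nu\to\infty$ in \cref{basic-inequality} when $T=0$.
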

\begin{proof}
	Observe that $\langle i T \circ \ol{T} , \ol u \wedge u\rangle_h = |Tu|^2$. The statement then follows from \cref{basic-inequality} with $\nu = 2\sigma -1$.
\end{proof}

\section{Proof of \cref{thm-1,cor3}}\label{sec:conclusion}
We first prove \cref{thm-1}. By the hypothesis, \cref{mean-value-lem} may be applied uniformly on geodesic balls of radius $1$. In particular, \bel\label{mean}
|F(p)|^2\lesssim \frac{e^{2\psi(p)}}{\vol(p,1)}\int_{B(p,1)}|F|^2\,d\mu\qquad\forall F\in A^2(\mu).
\eel
where the implicit constant depends only on $B$ and the $K$.
By \cref{eq:berman_kernel_variational}, we have
\bel\label{diag}
|K_\mu(p,p)|\lesssim \frac{e^{2\psi(p)}}{\text{Vol}(p,1)},
\eel 
and therefore, by the Cauchy--Schwarz inequality~\cref{cauchy-schwarz},
\begin{align}
	|K_\mu(p,q)|^2e^{-2\psi(p)-2\psi(q)}
	& \leq
	|K_\mu(p,p)||K_\mu(q,q)|e^{-2\psi(p)-2\psi(q)} \notag \\
	& \lesssim 
	\frac{1}{\vol(p,1)\vol(q,1)}.
\end{align}
To prove the off-diagonal exponential decay \cref{exp-decay-2}, we can assume without loss of generality that $d(p,q)\geq 4$. Applying \cref{mean} to $K_\mu(\cdot,q)$ we get:
\be
	|K_\mu(p,q)|^2\lesssim \frac{e^{2\psi(p)}}{\vol(p,1)}\int_{B(p,1)}|K_\mu(p',q)|^2\,d\mu(p').
\ee
Observe that if $\chi_p(p')\coloneqq\max\{0,1-d(B(p,1),p')\}$, the definition of $B_\mu$ and Kohn's identity~\cref{eq:kohns_identity} give
\begin{align}
	\int_{B(p,1)}|K_\mu(p',q)|^2\,d\mu(p') 
	& \leq
	\int_MK_\mu(q,p')K_\mu(p',q)\chi_p(p')\,d\mu(p') \notag \\
	& = 
	B_\mu(K_\mu(\cdot,q)\chi_p)(q) \notag \\
	& = 
	K_\mu(q,q)\chi_p(q) - (\dbarstar\boxop^{-1}u)(q),
\end{align}
where $u=\dbar\left(K_\mu(\cdot,q)\chi_p\right)=K_\mu(\cdot,q)\dbar\chi_p$ is a $(0,1)$-form supported on $B(p,2)$ (in fact on $B(p,2)\setminus B(p,1)$). Since $d(p,q)\geq4$, the first term vanishes. The second one may be bounded with \cref{can-point-thm}, which gives
\be
	\left|(\dbarstar\boxop^{-1}u)(q)\right|\lesssim\frac{e^{\psi(q)}}{\sqrt{\vol(q,2)}}e^{-\gamma d(p,q)}\sqrt{\int_{B(p,2)}|K_\mu(\cdot,q)|^2|\dbar\chi_p|_h^2\,d\mu},
\ee 
where $\gamma<\sqrt{2}b$. Notice that by taking the square root we lose a factor $2$, with respect to \cref{can-point-thm}. Since $|\dbar\chi_p|_h^2\leq1/2$, we finally estimate
\begin{align}
	\int_{B(p,2)}|K_\mu(p',q)|^2|\dbar\chi_p(p')|_h^2\,d\mu(p')
	&\lesssim
	|K_\mu(q,q)|\int_{B(p,2)}|K_\mu(p',p')|\,d\mu(p') \notag \\
	&\lesssim
	\frac{e^{2\psi(q)}}{\vol(q,1)}\int_{B(p,2)}\frac{\dvol(p')}{\vol(p',1)},
\end{align}
where we used the diagonal bound \cref{diag}. By the Bishop--Gromov volume comparison theorem \cite[Theorem 4.19]{gallot--hulin--lafontaine}, $\vol(p',1)\approx\vol(p,2)$ for every $p'\in B(p,2)$, where the implicit constant depends just on the dimension of $M$ and the lower bound on the Ricci curvature. Thus
 \begin{equation}
	 \int_{B(p,2)}\frac{\dvol(p')}{\vol(p',1)}\approx 1.
 \end{equation}
Analogously, \(\vol(q,2)\approx \vol(q,1) \), and therefore 
\be
	|(\dbarstar\boxop^{-1}u)(q)|\lesssim\frac{e^{2\psi(q)}}{\vol(q,1)}e^{-\gamma d(p,q)},
\ee 
which, together with the estimates obtained above, gives the desired estimate. The last part of the theorem follows from \cref{cor:pointwisecondition}. 

We now turn to the proof of \cref{cor3}. By assumption, we can take $b'<b$ such that $\gamma < 2b'\sqrt{(\eta -1)/\eta}$. Choose $k$ large enough such that $k^2(b^2 - b'^2) + P \geq 0$ so that 
	\begin{equation}
	k^2(b^2 - b'^2) \omega_{h} \geq  i\eta\,  T_h \circ \ol{T}_h -\Theta_h. 
	\end{equation}
	Putting $h^{(k)}: = k^2 h$, by direct calculations we get
	\begin{align}
	F^{\mu(k)} & =
	k^2 i\partial \bar{\partial} \psi + \frac{1}{2}\Theta_h \notag \\
	& \geq \frac{1}{2}k^2 b'^2 \omega_h + \left(\frac{\eta}{2}\right) i T_h \circ \ol{T}_h \notag  \\
	& = \frac{1}{2} b'^2 \omega_{h^{(k)}} + \left(\frac{\eta}{2}\right) i T_{h^{(k)}} \circ \ol{T}_{h^{(k)}}.
	\end{align}
	By \cref{cor:pointwisecondition}, the complex Laplacian $\Box_{h^{(k)}, \mu^{(k)}}$ is $\widetilde{b}^2$-coercive with $\widetilde{b}^2 = b'^2(\eta -1)/\eta$ and \cref{thm-1} holds with $\gamma < b'\sqrt{2(\eta -1)/\eta} = 2\widetilde{b}$. On the other hand,
	\begin{equation}
	\mu^{(k)} = e^{-2k^2\psi} \vol_h
	=
	e^{-2k^2\psi} k^{-2n}\vol_{h^{(k)}}
	=
	e^{-2\psi^{(k)}}\vol_{h^{(k)}}
	\end{equation}
	with $\psi^{(k)} = k^2 \psi + 2n \log k$. Therefore,
	\begin{equation}
	\trace_{\omega_{h^{(k)}}} (i\partial \bar{\partial} \psi^{(k)}) + \tfrac{1}{8}|\theta|_{h^{(k)}}^2
	\leq
	nB + \tfrac{1}{8} k^{-2} Q.
	\end{equation}
	Applying \cref{thm-1} for $\mu^{(k)}$ and $h^{(k)}$, we obtain for $p,q \in M$,
	\begin{align}
	|K_{\mu^{(k)}}(p,q)| e^{-k^2\psi(p)-k^2\psi(q)}
	&\leq C\frac{e^{-\gamma d_{h^{(k)}}(p,q)}}{\sqrt{\vol_{h^{(k)}}(p,1)\vol_{h^{(k)}}(q,1)}}\notag \\
	&= C\frac{e^{-\gamma k d_h(p,q)}}{\sqrt{\vol_h(p,k^{-1})\vol_{h}(q,k^{-1})}},
	\end{align}
	where $C$ does not depend on $k$. Finally, observe that by the assumption on the lower bound of the Ricci tensor,
	\[
		\vol_h (p, 1) \leq \vol_h(p, k^{-1}) k^{2n} e^{\sqrt{-K}}, \quad k \geq 1
	\]
	and similarly for $q$. Plugging these into the inequality above, we finally obtain \cref{exp-decay-3}.

\section{An example: ACH metrics of Bergman-type}\label{sec:ach}

In this last section, we discuss in some detail an interesting example. Let $D$ be a precompact strictly pseudoconvex domain in a complex manifold $X$ with smooth boundary. Suppose that $D$ is defined by $\varrho < 0$, with $d\varrho \ne 0$ on $\partial D$ and $\varrho$ is smooth in a neighborhood $U$ of $\partial D$. We further assume that $-\log (-\varrho)$ is strictly plurisubharmonic on $U\cap D$. In this case, $ -i\partial \bar{\partial} \log (-\varrho)$ defines an asymptotically complex hyperbolic (ACH) K\"ahler metric $h_{\varrho}$ on $U\cap D$.\newline
Given any Hermitian metric $\widetilde{h}$ on $D$, we can patch, using a partition of unity, $\widetilde{h}$ and $h_{\varrho}$ to obtain a Hermitian metric $h$ on $D$ such that $h =h_{\varrho}$	on $U\cap D$. It is known that the curvature tensor of $h$ approaches the curvature tensor of constant holomorphic sectional curvature $-4$ (see \cite{klembeck}). In particular, the sectional curvature is bounded from above, while $\operatorname{Ric}_h$ and $\Theta_h$ are bounded from below. The last fact is easy to see: near the boundary $\partial D$, in local coordinates
 
	\begin{equation*}
		\operatorname{Ric}_h
		=
		-(n+1) \omega_{\phi} - i \partial\bar{\partial} \log J[\varrho]
	\end{equation*}
		where $J[\varrho]$ is the (Levi--)Fefferman determinant:
	\begin{equation*}
		J[\varrho] = - \det 
		\begin{bmatrix}
		\varrho & \varrho_{\bar{j}} \\
		\varrho_k & \varrho_{k\bar{j}}
		\end{bmatrix}.
	\end{equation*}
Notice that $i \partial\bar{\partial} \log J[\varrho]$, which does not depend on the local coordinates, extends smoothly to a neighborhood of $\partial D$, and is hence bounded. Moreover, since $T_h=0$ near the boundary, $h$ must have bounded torsion.\newline 
Also note that in general the metric $h$ constructed in this way is non-K\"ahler and need not have bounded geometry.\newline 		
Suppose that $\mu$ is a smooth measure on $D$ such that $\boxop$ is $b^2$-coercive and with $\Delta_{h} \log (\dvol_h/d\mu)$ bounded from above. Then the Bergman kernel $K_{\mu}$ satisfies the exponential decay estimate \cref{exp-decay-2}, namely
		\begin{equation*}
		|K_{\mu}(p,q)| \leq \frac{C}{\eta(p)\eta(q)} e^{-\gamma d_{h}(p,q)},
		\end{equation*}
where $\eta = \dvol_h/d\mu$, $d_h$ is the Riemannian distance of $h$, and $\epsilon$ depends on the coercivity constant $b$. Observe that the volume factors have been absorbed into the constant since $h$ has sectional curvature bounded from above.\newline 
Moreover, by \cref{cor3}, if $i\partial\bar{\partial} \eta > \epsilon \omega_h$ for some $\epsilon >0$, then for $k$ large enough
\begin{equation*}
|K_{\eta^{k^2} d\vol_h}(p,q)| \leq \frac{Ck^{2n}}{\eta^k(p)\eta^k(q)} e^{-\gamma k d(p,q)},
\end{equation*}
for some constants $C$ and $\gamma$ do not depend on $k$.\newline 
When $h= -i\partial \bar{\partial} \log (-\varrho)$ is defined on $D$, the coercivity is satisfied under an explicit condition on $\mu$ and $\varrho$, i.e, when
	\begin{equation*}
		i \partial\bar{\partial} \log \left(d\mathrm{vol}_h/d\mu\right) \geq i \partial\bar{\partial} \log ((-\varrho)^{-n-1-b} J[\varrho]).
	\end{equation*}
	If $D\subset \mathbb{C}^n$ and $d\mu = e^{-\varphi}d\lambda$, then the condition $F^\mu \geq b\omega_\phi$ translates into
	\begin{equation*}
			i \partial\bar{\partial} \log [(-\varrho)^b e^\varphi] \geq 0,
	\end{equation*}
	in other words, when $\log [(-\varrho)^b e^\varphi]$ is strictly plurisubharmonic for some positive constant~$b$.

\section*{Appendix}

We now show that the ``twisted basic identities'' of the kind discussed in, e.g., \cite{mcneal--varolin} are particular instances of Proposition \ref{prop:basic} when the metric is conformally K\"ahler.\newline 
To see this, let $(M,h)$ be a K\"ahler manifold and $\tau:M\rightarrow(0,+\infty)$ a smooth ``twisting factor''.  One may easily verify that the diagram below is an isomorphism of Hilbert complexes, i.e., that the vertical arrows are unitary isomorphisms and the two squares commute:

\begin{center}\begin{tikzpicture}
	\matrix (m) [matrix of math nodes,row sep=3em,column sep=4em,minimum width=2em]
	{
		L^2(M,\mu) &	L^2_{(0,1)}(M,h,\mu) & L^2_{(0,2)}(M,h,\mu) \\
		L^2(M,\tau^{-1}\mu) &	L^2_{(0,1)}(M,\tau^{-1} h,\tau^{-1}\mu) & L^2_{(0,2)}(M,\tau^{-1} h,\tau^{-1}\mu) \\};
	\path[-stealth]
	(m-1-1) edge node [above] {$\dbar\circ\sqrt{\tau}$} (m-1-2)
	(m-1-2) edge node [above] {$\sqrt\tau \circ \dbar$} (m-1-3)
	(m-1-1) edge node [left] {$\sqrt\tau$} (m-2-1)
	(m-1-2) edge node [left] {$1$} (m-2-2)
	(m-1-3) edge node [left] {$1/\sqrt\tau$} (m-2-3)
	(m-2-1) edge node [above] {$\dbar$} (m-2-2)
	(m-2-2) edge node [above] {$ \dbar$} (m-2-3)
  ;
	\end{tikzpicture}
\end{center}
Here the vertical arrows are multiplication operators by the indicated functions. Therefore, the twisted complex in the top row is isomorphic to the weighted $\dbar$-complex in the second row. As a consequence, putting $\widetilde{h}:=\tau^{-1}h$ and $\widetilde{\mu}:=\tau^{-1}\mu$, we have \begin{equation*}
\mathcal{E}_{\widetilde{h}, \widetilde{\mu}}(u)=\int_M\tau|\dbar^*_{h,\mu}u|^2d\mu + \int_M\tau|\dbar u|_h^2d\mu.
\end{equation*}
We now compute the left hand side using \cref{prop:basic}. Let $\eta := -\log \tau$ and observe that the Christoffel symbols of the Chern connection of $\widetilde{h}$ are $\widetilde{\Gamma}^{i}_{jk} = {\Gamma}^{i}_{jk} + \eta_j \delta^i_k$, where ${\Gamma}^{i}_{jk}$ are the Christoffel symbols of the Chern connection of $h$ and $\eta_j$ is a shorthand for $\partial_j\eta$. Hence, we have
$\overline{\nabla}^{\widetilde{h}}_{\bar{j}} u_{\bar{k}}
=
\overline{\nabla}^{h}_{\bar{j}} u_{\bar{k}} - \eta_{\bar{j}} u_{\bar{k}}$.
Representing covariant derivatives with respect to $\nabla^h$ by indices preceeded by a vertical bar $|$, we have
\begin{equation*}
|\overline{\nabla}^{\widetilde{h}} u|^2_h
=
\left(|\overline{\nabla}^{h} u|^2_{h}
+
\frac 12|\bar{\partial} \eta|^2_{h} \, |u|^2_{h}
-
\Re \left(u_{\bar{k}|\bar{j}}\, \eta^{\bar{j}} u^{\bar{k}}\right)\right).
\end{equation*}
Moreover, since the torsion of $\widetilde{h}$ is given by $\widetilde{T}^i_{jk} = \eta_j \delta^i_k - \eta_k \delta^i_j$, we have $\widetilde{T}^i_{jk} u_i = \eta_j u_k - \eta_k u_j$ and
\begin{equation*}
|\widetilde{T}^i_{jk}u_i|^2_h
=
|\bar{\partial} \eta|^2_{h} \, |u|^2_{h}
-
|\langle u , \bar{\partial} \eta \rangle_h|^2.
\end{equation*}
Equation \eqref{eq:basic_identity} yields
\begin{align*}
\mathcal{E}_{\widetilde{h}, \widetilde{\mu}}(u) 
& =
2\int_M |\overline{\nabla}^{\widetilde{h}} u|_{\widetilde{h}}^2d\widetilde{\mu}
+
2 \int_M \langle F^{\widetilde\mu} , \overline{u} \wedge u \rangle _{\widetilde{h}} d\widetilde{\mu} \\ 
& \quad + 2\Re \int_M \langle \bar{\partial} u , \widetilde{T} u \rangle_{\widetilde{h}} d\widetilde{\mu} - \int_M |\widetilde{T} u |_{\widetilde{h}}^2 d\widetilde{\mu}\\
&=
2\int_M \tau |\overline{\nabla}^{h} u|^2_{h}d\mu 
+ 2\int_M  \left\langle \tau F^{\mu}+\frac{i}{2}\partial\dbar\tau, \overline{u} \wedge u \right\rangle_h d\mu \\
& \quad - 2\Re \int_M \tau \left(u_{\bar{k} | \bar{j}}\, \eta^{\bar{j}} u^{\bar{k}}\right) d\mu + 2 \Re \int_M \tau \langle \bar{\partial} u , \widetilde{T} u \rangle_h d\mu.
\end{align*}
We will now use the identity
\begin{equation*}
\langle \bar{\partial} u , \widetilde{T} u \rangle_h 
=
u_{\bar{k} | \bar{j}}\, \eta^{\bar{j}} u^{\bar{k}}
-
u_{\bar{j} | \bar{k}}\, \eta^{\bar{j}} u^{\bar{k}},
\end{equation*}
and the fact that the metric $h$ is K\"ahler. This allows to integrate by parts in the following way:
\begin{align*}
&- 2\Re \int_M \tau \left(u_{\bar{k} | \bar{j}}\, \eta^{\bar{j}} u^{\bar{k}}\right) d\mu + 2 \Re \int_M \tau \langle \bar{\partial} u , \widetilde{T} u \rangle_h d\mu\\
&=- 2\Re  \int_M e^{-\eta } \left(u_{\bar{j} | \bar{k}}\, \eta^{\bar{j}} u^{\bar{k}}\right) d\mu \\
&= 
2\Re \int u_{\bar{j}} \left(\eta^{\bar{j}}u^{\bar{k}} e^{-\eta -2\psi} \right)_{| \bar{k}} d\text{Vol} \notag \\
& = 
2 \int_M \eta_{j\bar{k}} u^{j} u^{\bar{k}} e^{-\eta} d\mu
-
2\Re \int_M \langle u ,\bar{\partial} \eta \rangle _h (\overline{\bar{\partial}^{\ast}_{\mu, h} u}) e^{-\eta} d\mu \notag \\
& \quad - 
2 \int_M  |\langle u , \bar{\partial} \eta \rangle|^2_h  e^{-\eta} d\mu \notag  \\
& = 
-2\int_M \tau_{j\bar{k}} u^j u^{\bar{k}} d\mu + 2\Re \int_M \langle u ,\bar{\partial} \tau \rangle _h (\overline{\bar{\partial}^{\ast}_{\mu, h} u}) d\mu .
\end{align*}
Notice that we used the elementary identity $\bar{\partial}^{\ast}_{\mu, h} u = -\left(u^{\bar{k}} e^{-2\psi} \right)_{| \bar{k}}$. Putting everything together, we find
\begin{align*}
\int_M \tau |\bar{\partial} u|^2_{h}d\mu
+
\int_M \tau |\bar{\partial} ^{\ast} _{\mu,h} u|^2 d\mu
& =
2 \int_M \tau |\overline{\nabla}^{h} u|^2_{h}d\mu 
+ 2\int_M  \tau \left\langle F^{\mu} - \frac{i}{2}\partial\bar{\partial }\tau , \overline{u} \wedge u \right\rangle _h d\mu \notag \\
& \quad +2 \Re \int_M  (\bar{\partial}^{\ast}_{\mu,h} u)\overline{\langle u, \bar{\partial} \tau \rangle}_h d\mu.
\end{align*}
This is Theorem 3.1 of \cite{mcneal--varolin} for $\Omega=M$.

\bibliographystyle{amsalpha}
\bibliography{bergman}
\end{document}
